\tikzstyle{pnt}=[draw,ellipse,fill,inner sep=1pt]
\newtheorem{theorem}{Theorem}[section]
\newtheorem{proposition}[theorem]{Proposition}
\newtheorem{corollary}[theorem]{Corollary}
\newtheorem{problem}[theorem]{Problem}  
\theoremstyle{definition}
\newtheorem{example}[theorem]{Example}
\theoremstyle{remark}
\newcommand\G{\mathcal G}
\newcommand\R{\mathcal R}
\newcommand\M{\mathcal M}
\newcommand\Cat{C}
\renewcommand\S{\mathcal S}
\DeclareMathOperator\ds{ds}
\DeclareMathOperator\da{da}
\newcommand\sz{s_0}
\newcommand{\E}{\mathbb{E}}
\newcommand{\V}{\mathbb{V}}
\renewcommand{\Pr}{\mathbb{P}}
\newcommand{\bij}{\phi}
\title{The degree of asymmetry of sequences}
\author{
Sergi Elizalde\thanks{Department of Mathematics, Dartmouth College, Hanover, NH 03755, USA. {\tt sergi.elizalde@dartmouth.edu}.}
\and
Emeric Deutsch\thanks{NYU Tandon School of Engineering, Brooklyn, NY 11201, USA.}
}
\date{}
\begin{document}

\maketitle

\begin{abstract}
We explore the notion of {\em degree of asymmetry} for integer sequences and related combinatorial objects. The degree of asymmetry is a new combinatorial statistic that measures how far an object is from being symmetric. We define this notion for compositions, words, matchings, binary trees and permutations, we find generating functions enumerating these objects with respect to their degree of asymmetry, and we describe the limiting distribution of this statistic in each case. 
\end{abstract}

\medskip

\noindent{\bf Keywords}: asymmetry; symmetry; composition; word; matching; binary tree

\noindent{\bf 2020 Mathematics Subject Classification}: 05A15; 05A05; 05A19

\section{Introduction}

It is common in mathematics to study objects that are invariant under certain symmetries. Examples of such objects in combinatorics include
self-conjugate partitions~\cite[Prop.\ 1.8.4]{EC1}, symmetric plane partitions~\cite{Mac,Stanley}, symmetric planar maps~\cite{Albenque}, 
centrally symmetric dissections of a polygon~\cite{Simion}, palindromic compositions~\cite{HB}, symmetric lattice paths~\cite{DDS}, involutions, and centrosymmetric permutations.

To interpolate between the set of all objects and the subset of symmetric ones, it is natural to introduce the notion of {\em degree of asymmetry}, which is a measure of how asymmetric the object is, i.e., how far it is from being invariant under the symmetry operation. In particular, symmetric objects are those having degree of asymmetry equal to $0$.
To the best of our knowledge, the notion of degree of asymmetry is new. It has some similarities to other statistics studied in the literature, such as the number of {\em centered tunnels} in Dyck paths (introduced in~\cite{Elifpexc,EliDeu,EliPak}) or the number of transpositions of a permutation.

In this paper we study the degree of asymmetry of integer sequences, including compositions and words (Section~\ref{sec:comp_words}), binary words with the same number of zeros and ones (Section~\ref{sec:restricted}), and other related combinatorial objects such as binary trees (Section~\ref{sec:trees}),
matchings (Section~\ref{sec:matchings}) and permutations (Section~\ref{sec:perm}). We provide generating functions for these objects with a variable $t$ that marks their degree of asymmetry, so that the specialization $t=0$ enumerates in each case the subclass of symmetric objects.
We use singularity analysis of bivariate generating functions to determine the limiting distribution of the degree of asymmetry on these objects. For compositions and words, the resulting distributions are asymptotically normal, whereas in the case of binary trees and matchings we obtain convergence to discrete limit laws. For binary words with the same number of zeros and ones, we find a bijection that proves a surprising connection between statistics that relate to the degree of symmetry and statistics that count pattern occurrences.
This work is complemented by a concurrent paper by the first author~\cite{Elids}, which focuses on a similar notion of symmetry for lattice paths and partitions.

\section{Compositions and words}\label{sec:comp_words}

Let $S=(a_1,a_2,\dots,a_m)$ be a finite sequence. We define the {\em degree of asymmetry} of $S$ to be the number of pairs of symmetrically positioned distinct entries, and denote it by $\da(S)$. Specifically,
$$\da(S)=\left|\{i:1\le i\le m/2: a_i\neq a_{m+1-i}\}\right|.$$
For example, if $S=(2,5,1,3,2)$, then $\da(S)=1$. By definition, $\da(S)=0$ if and only if $S$ is palindromic, that is, equal to its reversal.

\subsection{Compositions}\label{sec:comp}

A sequence of positive integers $(a_1,a_2,\dots,a_m)$, where $m\ge0$ and $a_1+a_2+\dots+a_m=n$, is called a {\em composition} of $n$ with $m$ parts. 
Let $K(t,x,z)$ be the generating function for compositions where $t$ marks the degree of asymmetry, $x$ marks the number of parts, and $z$ marks the sum of its parts. 

\begin{proposition} \label{prop:comp}
$$K(t,x,z)=\frac{(1-z+xz)(1-z^2)}{(1-z)(1-(1+x^2)z^2)-2tx^2z^3}.$$
\end{proposition}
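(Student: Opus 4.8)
The plan is to set up a combinatorial decomposition of a composition that reflects exactly the symmetry structure measured by $\da$. I would read a composition $(a_1,\dots,a_m)$ from the outside in, grouping it into the symmetrically positioned pairs $(a_1,a_m),(a_2,a_{m-1}),\dots$, together with a single central part $a_{(m+1)/2}$ when $m$ is odd. This gives a bijection between compositions and finite ordered sequences of pairs of positive integers, optionally followed by one extra positive integer (the center). The point of this decomposition is that $\da$ is additive over the pairs: a pair $(a,b)$ contributes $1$ to $\da$ precisely when $a\neq b$, while the central part never contributes.

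Next I would compute the generating function $P=P(t,x,z)$ for a single pair, where the two parts contribute $x^2$ to the part-count, $z^{a+b}$ to the size, and a factor $t$ to the asymmetry weight exactly when $a\neq b$. Splitting the sum over $a,b\ge1$ according to whether $a=b$ gives
$$P=x^2\left(t\,\frac{z^2}{(1-z)^2}+(1-t)\,\frac{z^2}{1-z^2}\right)=\frac{x^2z^2(1-z+2tz)}{(1-z)^2(1+z)}.$$
The central part, when present, contributes $\frac{xz}{1-z}$ and carries no power of $t$.

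Finally I would assemble these pieces. Since the outer layers form an arbitrary ordered sequence of independent pairs, they contribute a geometric series $\tfrac{1}{1-P}$, and the optional center contributes a factor $1+\frac{xz}{1-z}=\frac{1-z+xz}{1-z}$ (the summand $1$ accounting for even length, including the empty composition). Thus
$$K(t,x,z)=\frac{1}{1-P}\cdot\frac{1-z+xz}{1-z}.$$
Substituting the expression for $P$, clearing denominators, and factoring $(1-z)$ out of the resulting denominator should collapse everything to the claimed closed form. I expect the only real work here to be this last algebraic simplification together with the bookkeeping needed to confirm the decomposition is genuinely bijective and that the $t=0$ specialization recovers palindromic compositions; there is no deep obstacle, since the key insight is simply that $\da$ decomposes as a sum over the outside-in pairs.
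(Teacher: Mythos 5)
Your proposal is correct, and it is essentially the same argument as the paper's: the paper peels off the outermost symmetric pair recursively, obtaining $K = 1 + \frac{xz}{1-z} + \bigl(\frac{tx^2z^2}{(1-z)^2} + \frac{(1-t)x^2z^2}{1-z^2}\bigr)K$, which is exactly the recursion whose unrolled solution is your sequence-of-pairs form $K = \frac{1}{1-P}\bigl(1+\frac{xz}{1-z}\bigr)$. Both proofs rest on the same two observations — the decomposition into symmetrically positioned pairs plus an optional center, and the split of each pair according to whether its entries are equal — so the difference is purely one of presentation (functional equation versus explicit geometric series).
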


\begin{proof}
The generating function $K=K(t,x,z)$ satisfies 
$$K=1+x\sum_{i\ge1}z^i+tx^2\sum_{i,j\ge1}z^{i+j}K+(1-t)x^2\sum_{i\ge1}z^{2i}K,$$
where the summands correspond to the empty word, a word $i$ of length 1, a word of length at least $2$ starting with $i$ and ending with $j$, and the corresponding correction for words where $i=j$, which should be weighted by $t$.
It follows that $$K=1+\frac{xz}{1-z}+\frac{tx^2z^2K}{(1-z)^2}+\frac{(1-t)x^2z^2K}{1-z^2}.$$ 
Solving for $K$ gives the stated expression.
\end{proof}

More generally, one can consider compositions whose parts belong to a (finite or infinite) set $\R$,
and let $R(z)=\sum_{r\in \R} x^r$. Let $K_\R(t,x,z)$ be the generating function for such compositions with respect to the degree of asymmetry.

\begin{proposition} \label{prop:CR} 
$$K_\R(t,x,z)=\frac{1+xR(z)}{1-x^2R(z^2)-tx^2\left(R(z)^2-R(z^2)\right)}.$$
\end{proposition}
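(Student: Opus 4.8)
The plan is to mirror the functional-equation approach used in the proof of Proposition~\ref{prop:comp}, but now working with the general part-generating function $R(z)=\sum_{r\in\R}x^r$ in place of the geometric series $\sum_{i\ge1}z^i$ that encoded all positive integers. The key observation is that in the decomposition of a composition by its first and last part, each occurrence of $z^i$ (a single part of size $i$ contributing $z^i$) gets replaced by the appropriate term of $R(z)$, and a pair of parts $(i,j)$ contributes $R(z)^2$ when the sizes range freely.

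First I would set up the recursive decomposition exactly as before: a composition is either empty, a single part, or has length at least two, in which case we strip off the first and last parts and are left with an (arbitrary) composition in the middle. The empty composition contributes $1$; a single part contributes $R(z)$ (here weighted by $x$ if one tracks a separate variable for the number of parts, so $xR(z)$). For the length-$\ge 2$ case, the outer pair of parts contributes $x^2$ times a sum over all ordered pairs of parts, with the degree of asymmetry incremented by one precisely when the two outer parts differ. The sum over all ordered pairs is $R(z)^2$, the sum over equal pairs (the symmetric case, not marked by $t$) is $R(z^2)$, and the sum over distinct pairs (marked by $t$) is $R(z)^2 - R(z^2)$. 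This yields the functional equation
\begin{equation*}
K_\R = 1 + xR(z) + x^2 R(z^2)\, K_\R + t\, x^2\bigl(R(z)^2 - R(z^2)\bigr)\, K_\R.
\end{equation*}

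Solving this linear equation for $K_\R$ immediately gives the claimed expression, so the computation itself is routine once the equation is in hand. The step I expect to require the most care is the bookkeeping that isolates the $R(z^2)$ term correctly: one must verify that a symmetric pair $(i,i)$ of outer parts contributes $z^{2i}$, and that summing over $i$ with $i\in\R$ produces exactly $R(z^2)$ rather than some other substitution. This is the analogue of the split between the $\frac{tx^2z^2K}{(1-z)^2}$ and $\frac{(1-t)x^2z^2K}{1-z^2}$ terms in the previous proof, where the diagonal $i=j$ was separated from the full double sum; here the same $t$ versus $1-t$ weighting shows up as the split $R(z)^2 = \bigl(R(z)^2 - R(z^2)\bigr) + R(z^2)$, with the diagonal piece $R(z^2)$ carrying weight $1$ and the off-diagonal piece carrying weight $t$. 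I would double-check the special case $\R=\{1,2,3,\dots\}$, where $R(z)=\frac{z}{1-z}$ and hence $R(z^2)=\frac{z^2}{1-z^2}$, recovering Proposition~\ref{prop:comp} as a consistency check.
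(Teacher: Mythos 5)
Your proposal is correct and follows essentially the same route as the paper's proof: you derive the identical functional equation $K_\R = 1 + xR(z) + x^2R(z^2)K_\R + tx^2\bigl(R(z)^2-R(z^2)\bigr)K_\R$ by stripping off the first and last parts and splitting the diagonal pairs (weight $1$, contributing $R(z^2)$) from the off-diagonal pairs (weight $t$, contributing $R(z)^2-R(z^2)$), then solve for $K_\R$. The consistency check against Proposition~\ref{prop:comp} is a nice addition but not needed for correctness.
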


\begin{proof}
We claim that the generating function $K_\R=K_\R(t,x,z)$ satisfies
$$K_\R=1+xR(z)+x^2R(z^2)K_\R+tx^2(R(z)^2-R(z^2))K_\R.$$
Indeed, $x^2R(z^2)K_\R$ is the generating function for compositions with at least 2 parts where the first and the last entry are the same, and 
$tx^2(R(z)^2-R(z^2))K_\R(t,z)$ counts compositions where the first and the last entry are different.
Solving for $K_\R$ gives the stated expression.
\end{proof}

As expected, the substitution $t=1$ gives $K_\R(1,x,z)=1/(1-xR(z))$, the generating function for compositions with parts in $\R$. The substitution $t=0$ gives $$K_\R(0,x,z)=\frac{1+xR(z)}{1-x^2R(z^2)},$$ which is the generating function for palindromic compositions with parts in $\R$. This generating function has been obtained by Hoggatt and Bicknell~\cite[Thms.\ 1.1 and 1.2]{HB}.

Next we analyze the generating function 
\begin{equation}\label{eq:Ctz} 
K(t,z):=K(t,1,z)=\frac{1-z^2}{(1-z)(1-2z^2)-2tz^3}
\end{equation}
 from Proposition~\ref{prop:comp} in order to describe the asymptotic behavior of the degree of asymmetry of compositions. 
If $X_n$ is the random variable that gives the degree of asymmetry of a uniformly random composition of $n$, then
\begin{equation}\label{eq:Xn}
\Pr(X_n=k)=\frac{[t^kz^n]K(t,z)}{[z^n]K(1,z)}.
\end{equation}
Denote by $K_t$ and $K_{tt}$ the first and second partial derivatives of $K(t,z)$ with respect to $t$, respectively, and let $[z^n]f(z)$ denote the coefficient of $z^n$ in a formal power series $f(z)$. It is well known that the expected value and the variance of $X_n$ are given by
\begin{equation}\label{eq:EV}
\E X_n=\frac{[z^n]K_t(1,z)}{[z^n]K(1,z)} \quad\text{and}\quad
\V X_n=\frac{[z^n]K_{tt}(1,z)+[z^n]K_t(1,z)}{[z^n]K(1,z)}-\left(\frac{[z^n]K_t(1,z)}{[z^n]K(1,z)}\right)^2,
\end{equation}
respectively.

\begin{corollary}\label{cor:limitCtz}
The sequence of random variables $X_n$, giving the degree of asymmetry of a uniformly random composition of $n$, converges to a normal distribution with expected value and variance given by
$$\E X_n=\frac{n}{6}+O(1),\qquad \V X_n=\frac{5n}{108}+O(1). $$
\end{corollary}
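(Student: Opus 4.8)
The plan is to treat $K(t,z)$ as a bivariate generating function in which $t$ is a marking variable, and to extract the limiting distribution of $X_n$ from how the dominant singularity in $z$ moves as $t$ varies near $1$. Since $K(t,z)$ is rational in $z$, the natural framework is meromorphic singularity perturbation together with the quasi-powers theorem: if the probability generating function $\E(t^{X_n})=[z^n]K(t,z)/[z^n]K(1,z)$ admits a uniform estimate $A(t)\,B(t)^n(1+o(1))$ with $A,B$ analytic near $t=1$ and $B(1)=1$, then $X_n$ is asymptotically normal with mean $\mu n+O(1)$ and variance $\sigma^2 n+O(1)$, where $\mu$ and $\sigma^2$ are read off from the first two derivatives of $B$ at $t=1$.

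First I would locate the dominant singularity. Writing $D(t,z)=(1-z)(1-2z^2)-2tz^3$ for the denominator in \eqref{eq:Ctz}, I note that the cubic terms cancel at $t=1$, leaving $D(1,z)=1-z-2z^2=(1-2z)(1+z)$; hence at $t=1$ the roots are $z=1/2$ and $z=-1$ (the third root having escaped to infinity as $t\to1$), so the smallest in modulus is $\rho(1)=1/2$, consistent with $[z^n]K(1,z)=[z^n]\frac{1-z}{1-2z}=2^{n-1}$. By the implicit function theorem, for $t$ in a neighborhood of $1$ the equation $D(t,z)=0$ has a unique simple root $\rho(t)$ of smallest modulus, analytic in $t$ with $\rho(1)=1/2$, while the remaining roots stay bounded away from it. Since the numerator $1-z^2$ equals $3/4\neq0$ at $z=\rho(1)$, the function $K(t,z)$ has a simple pole at $z=\rho(t)$, and meromorphic singularity analysis yields $[z^n]K(t,z)=A(t)\,\rho(t)^{-n}\bigl(1+O(\omega^{-n})\bigr)$ uniformly for $t$ near $1$, with some $\omega>1$. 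This is the quasi-powers form with $B(t)=\rho(1)/\rho(t)$.

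It then remains to compute $\mu$ and $\sigma^2$, for which I would differentiate the defining relation $D(t,\rho(t))=0$ implicitly at $t=1$. Using $D_t=-2z^3$ and $D_z=-1-4z+6(1-t)z^2$, evaluation at $(t,z)=(1,1/2)$ gives $D_t=-1/4$, $D_z=-3$, whence $\rho'(1)=-D_t/D_z=-1/12$ and $\mu=B'(1)=-\rho'(1)/\rho(1)=1/6$, so $\E X_n=n/6+O(1)$. A second implicit differentiation, with $D_{tt}=0$, $D_{tz}=-6z^2$, and $D_{zz}=-4+12(1-t)z$, yields $\rho''(1)=-\bigl(D_{tt}+2D_{tz}\rho'(1)+D_{zz}\rho'(1)^2\bigr)/D_z=2/27$. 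Feeding $\rho(1),\rho'(1),\rho''(1)$ into $B(t)=\rho(1)/\rho(t)$ gives $B'(1)=1/6$ and $B''(1)=-5/54$, and the standard quasi-powers variance formula $\sigma^2=B''(1)+B'(1)-B'(1)^2$ then produces $\sigma^2=5/108$, so $\V X_n=5n/108+O(1)$.

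The substantive part is not the arithmetic but verifying the analytic hypotheses that legitimize the quasi-powers conclusion: that $\rho(t)$ is genuinely the unique dominant singularity, simple and separated from the other roots uniformly on a complex neighborhood of $t=1$, so that the error term in the coefficient estimate is uniform in $t$; and that the variability condition $\sigma^2\neq0$ holds, which is confirmed a posteriori by $\sigma^2=5/108>0$. Once these are in place, the quasi-powers theorem delivers the Gaussian limit law and the stated asymptotics for $\E X_n$ and $\V X_n$ simultaneously.
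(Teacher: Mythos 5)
Your proposal is correct and is essentially the paper's own proof: both arguments rest on $K(t,z)$ being meromorphic in $z$ with a simple dominant pole at $z=\rho(t)$ moving analytically with $t$ near $1$, and both invoke the same quasi-powers-type theorem (\cite[Thm.~IX.9]{FS} or \cite[Thm.~1]{Bender}) for asymptotic normality, with the same singularity data $\rho'(1)=-1/12$, $\rho''(1)=2/27$ (your factorization $D(1,z)=(1-2z)(1+z)$ confirms $\rho(1)=1/2$; the paper's stated ``$\rho(1)=1$'' is evidently a typo). The only difference is how the constants are extracted: you read $\E X_n$ and $\V X_n$ off $B(t)=\rho(1)/\rho(t)$ by implicit differentiation --- precisely the route the paper takes for Corollary~\ref{cor:limitGtx} --- whereas here the paper computes them from exact partial-fraction expansions of $[z^n]K_t(1,z)$ and $[z^n]K_{tt}(1,z)$ via equation~\eqref{eq:EV}; both computations yield $n/6$ and $5n/108$.
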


\begin{proof} 
With $K(t,z)$ given by Equation~\eqref{eq:Ctz}, we can compute easily the expectation and variance using Equation~\eqref{eq:EV},
and the fact that $[z^n]K(1,z)=2^{n-1}$ for $n\ge1$,
$$[z^n]K_t(1,z)=\left(\frac{n}{12}-\frac{1}{19}\right)2^n-\frac{4}{9}(-1)^n$$
for $n\ge2$, and 
$$[z^n]K_{tt}(1,z)=\left(\frac{n^2}{72}-\frac{17n}{216}+\frac{2}{27}\right)2^n+\frac{16n-56}{27}(-1)^n$$
for $n\ge3$.

Convergence to a normal distribution follows from \cite[Thm.\ IX.9]{FS} or \cite[Thm.\ 1]{Bender}, noting that for $t$ in a neighborhood of $1$, the singularity of $K(t,z)$ closest to the origin is a simple pole at $z=\rho(t)$, where $\rho(1)=1$, $\rho'(1)=-1/12$, and $\rho''(1)=2/27$.
\end{proof}

It is well known \cite[Prop.\ III.4]{FS} that the expected number of parts of a random composition of $n$ is $(n+1)/2$, and that the distribution is concentrated around this mean.  The value of $\E X_n$ given in Corollary~\ref{cor:limitCtz} tells us that, of the roughly $n/4$ pairs of symmetrically positioned entries in a typical composition of $n$ (for large $n$), an average of about $2/3$ consist of distinct entries, contributing to the degree of asymmetry, while the remaining $1/3$ consist of equal entries.

\subsection{Words}\label{sec:words}

Words over an $m$-ary alphabet 
can be viewed as compositions with parts in $\{1,2,\dots,m\}$. 
The difference now is that the size function of a word is defined to be its length, i.e., its number of entries, rather than the sum of its entries.
Setting $R(z)=z^1+z^2+\dots+z^m$ and $z=1$ in Proposition~\ref{prop:CR}, we obtain the following result.

\begin{corollary}
The generating function for $m$-ary words with respect to the degree of asymmetry is
$$W_m(t,x)=\frac{1+mx}{1-mx^2-m(m-1)tx^2}.$$
\end{corollary}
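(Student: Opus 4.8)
The plan is to derive this as a direct specialization of Proposition~\ref{prop:CR}, so the main work is bookkeeping rather than a fresh combinatorial argument. First I would recall that words over the alphabet $\{1,2,\dots,m\}$ are exactly compositions with parts in the set $\R=\{1,2,\dots,m\}$, the only change being that the size statistic is now the length (number of parts) rather than the sum. Since the variable $z$ in Proposition~\ref{prop:CR} tracks the sum of the parts, and we no longer wish to weight by the sum, the natural move is to kill the $z$-dependence by setting $z=1$. With $\R=\{1,2,\dots,m\}$, the generating function $R(z)=\sum_{r\in\R}z^r=z+z^2+\dots+z^m$ satisfies $R(1)=m$ and likewise $R(z^2)\big|_{z=1}=R(1)=m$.

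Next I would substitute these values into the formula
$$K_\R(t,x,z)=\frac{1+xR(z)}{1-x^2R(z^2)-tx^2\bigl(R(z)^2-R(z^2)\bigr)}$$
of Proposition~\ref{prop:CR}. Setting $z=1$ turns the numerator into $1+mx$. In the denominator, $x^2R(z^2)$ becomes $mx^2$, and the bracketed term $R(z)^2-R(z^2)$ becomes $m^2-m=m(m-1)$, so the last summand becomes $tx^2\cdot m(m-1)$. Assembling these yields
$$W_m(t,x)=\frac{1+mx}{1-mx^2-m(m-1)tx^2},$$
which is exactly the claimed expression, with $x$ now marking the length of the word.

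The only point requiring a moment's care—and the closest thing to an obstacle here—is justifying that setting $z=1$ is legitimate and that it correctly reinterprets the size statistic. In Proposition~\ref{prop:CR} the exponent of $z$ records the sum of the parts while the exponent of $x$ records the number of parts; each occurrence of a part $r$ contributes $xz^r$. Substituting $z=1$ collapses $z^r\mapsto1$ for every $r$, so each part now contributes simply $x$ regardless of its value, which is precisely the weighting that makes $x$ mark the length of the word. I should note that this substitution is valid because $R(z)$ and $R(z^2)$ are polynomials (the alphabet is finite), so no convergence issue arises in evaluating at $z=1$; the degree-of-asymmetry variable $t$ is unaffected throughout, since the recursive decomposition underlying Proposition~\ref{prop:CR} distinguishes equal from distinct symmetric pairs independently of the part sizes. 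Reading off the result then gives the corollary.
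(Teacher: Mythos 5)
Your proposal is correct and matches the paper's own proof exactly: the paper likewise obtains this corollary by setting $R(z)=z+z^2+\dots+z^m$ and $z=1$ in Proposition~\ref{prop:CR}. Your additional remarks justifying the substitution $z=1$ (reinterpreting the size statistic as length, with no convergence issues since $R$ is a polynomial) are sound but simply make explicit what the paper leaves implicit.
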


We derive that the number of $m$-ary words of length $n$ with degree of asymmetry $k$ is 
$$[t^kx^n]W_m(t,x)=m^{\lceil n/2 \rceil}\binom{\lfloor n/2 \rfloor}{k}(m-1)^k.$$
Thus, the random variable $X_n$, giving the degree of asymmetry of a uniformly random $m$-ary word of length $n$, follows a binomial distribution 
with parameters $\left\lfloor \frac{n}{2}\right\rfloor$ and $1-\frac{1}{m}$.
In particular, 
\begin{equation}\label{eq:limitW}
\E X_n= \left(1-\frac{1}{m}\right)\left\lfloor \frac{n}{2}\right\rfloor, \qquad  \V X_n=\frac{1}{m}\left(1-\frac{1}{m}\right)\left\lfloor \frac{n}{2}\right\rfloor,
\end{equation} 
and the limiting distribution is again normal.

\section{Restricted binary words}\label{sec:restricted}

Next we consider words over a binary alphabet $\{0,1\}$ with the additional restriction that the number of zeros equals the number of ones. 
Let $\G_n$ be the set of words consisting of $n$ zeros and $n$ ones. Clearly, $|\G_n|=\binom{2n}{n}$, and
$$\sum_{n\ge0}|\G_n| x^n=\frac{1}{\sqrt{1-4x}}.$$

\subsection{The degree of asymmetry over $\G_n$}

Let $[n]=\{1,2,\dots,n\}$. As in Section~\ref{sec:comp_words}, define the degree of asymmetry of $w=w_1w_2\dots w_{2n}\in\G_n$ as
$$
\da(w)=\left|\{i\in[n]:w_i\neq w_{2n+1-i}\}\right|,
$$
i.e., the number of symmetrically positioned pairs of entries where one is a $0$ and the other is a $1$.
Let $G(t,x)=\sum_{n\ge0}\sum_{w\in\G_n} t^{\da(w)} x^n$ be the corresponding generating function.

\begin{theorem}\label{thm:restricted}
$$G(t,x)=\frac{1}{\sqrt{1 - 4tx + 4 (t^2-1) x^2}}.$$
\end{theorem}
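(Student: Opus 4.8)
The plan is to exploit the fact that every word in $\G_n$ factors, position by position, into $n$ symmetrically placed pairs, and to encode the balance constraint ``$\#0=\#1$'' using an auxiliary variable. Concretely, for $w=w_1w_2\dots w_{2n}\in\G_n$ consider the $n$ pairs $(w_i,w_{2n+1-i})$ with $i\in[n]$. Each such pair has one of the four types $(0,0)$, $(1,1)$, $(0,1)$, $(1,0)$; by definition $\da(w)$ counts exactly the \emph{mixed} pairs $(0,1)$ and $(1,0)$, while the requirement that $w$ have equally many zeros and ones becomes a single global condition on these pairs. Since the pairs are mutually independent and indexed by their positions, I would build a product-style generating function in which each pair contributes one factor.

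First I would introduce a catalytic variable $s$ marking the excess of zeros over ones, assigning weight $s$ to each $0$ and $s^{-1}$ to each $1$. A single pair then contributes $x\bigl(s^2+s^{-2}+2t\bigr)$: the types $(0,0)$ and $(1,1)$ give $s^2$ and $s^{-2}$, each mixed type gives $s\cdot s^{-1}=1$ weighted by $t$ (hence the $2t$), and the extra factor of $x$ records that one more pair, i.e.\ one more unit of $n$, has been used. Summing over all numbers of pairs gives the auxiliary generating function
$$\frac{1}{1-x\bigl(s^2+s^{-2}+2t\bigr)}.$$
The balance condition $\#0=\#1$ is exactly the statement that the total power of $s$ equals $0$, so I would recover $G(t,x)$ as the constant term in $s$ of this expression.

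The evaluation of that constant term is the heart of the argument and the step I expect to require the most care. Writing $c=1-2tx$ and expanding geometrically, one has
$$\frac{1}{c-x(s^2+s^{-2})}=\frac{1}{c}\sum_{j\ge0}\Bigl(\tfrac{x}{c}\Bigr)^{j}\bigl(s^2+s^{-2}\bigr)^{j},$$
and since $[s^0]\bigl(s^2+s^{-2}\bigr)^{j}$ vanishes for odd $j$ and equals $\binom{2k}{k}$ when $j=2k$, extracting the constant term leaves $\frac{1}{c}\sum_{k\ge0}\binom{2k}{k}\bigl(x/c\bigr)^{2k}$. Applying the identity $\sum_{k\ge0}\binom{2k}{k}y^{k}=1/\sqrt{1-4y}$ with $y=(x/c)^2$ collapses this to $1/\sqrt{c^2-4x^2}$, and expanding $c^2-4x^2=(1-2tx)^2-4x^2=1-4tx+4(t^2-1)x^2$ yields the claimed formula. (As a consistency check, $t=1$ gives $1/\sqrt{1-4x}$, matching $|\G_n|=\binom{2n}{n}$, and $t=0$ gives $1/\sqrt{1-4x^2}$, the generating function for balanced palindromes.) An alternative to the series manipulation would be to compute the constant term via the residue at the small root of the quadratic $\zeta^2-(c/x)\zeta+1$ obtained after clearing $s^{-2}$; this gives the same $1/\sqrt{c^2-4x^2}$ and could be used to confirm the analytic branch chosen above.
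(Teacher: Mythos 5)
Your proof is correct. Both you and the paper start from the same observation that a word in $\G_n$ is determined by its $n$ symmetrically placed pairs, each of type $00$, $11$, $01$ or $10$ (the paper phrases this via the zigzag map $Z$, which turns $\da$ into $o_{10}+o_{01}$), but from there the routes genuinely diverge. The paper handles the balance constraint structurally: the twin pairs ($00$ and $11$), read in order and undoubled, form an arbitrary word in $\G_m$, contributing $1/\sqrt{1-4x^2}$, and the runs of non-twin pairs are then inserted by the substitution $x \mapsto x/(1-2tx)$ together with one extra factor $1/(1-2tx)$. You instead enforce the constraint $\#(00\text{-pairs})=\#(11\text{-pairs})$ analytically, with a catalytic Laurent variable $s$ and a constant-term extraction, evaluated via $[s^0]\bigl(s^2+s^{-2}\bigr)^{2k}=\binom{2k}{k}$ and the central binomial series; this is legitimate as a formal computation since the coefficient of $x^n$ in your auxiliary series is a Laurent polynomial in $s$. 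The two arguments are algebraic mirror images of one another: your sum $\frac{1}{c}\sum_{k\ge0}\binom{2k}{k}(x/c)^{2k}$ with $c=1-2tx$ is exactly the expansion of the paper's composed generating function, with $\binom{2k}{k}=|\G_k|$ counting the undoubled twin-pair word. What each buys: the paper's decomposition is bijective in flavor, and it is what yields the closed formula for $|\{w\in\G_n:\da(w)=k\}|$ stated after the theorem as well as the refinement~\eqref{eq:4variables}; your kernel/constant-term method is more mechanical, needs no structural insight, also delivers~\eqref{eq:4variables} directly (weight the four pair types by $x_{00}s^2+x_{11}s^{-2}+x_{01}+x_{10}$ and repeat the extraction, giving $1/\sqrt{(1-x_{01}-x_{10})^2-4x_{00}x_{11}}$), and it adapts readily to other global balance constraints where a clean combinatorial decomposition might not be apparent.
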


\begin{proof}
It is possible to prove this formula by using the bijection from~\cite{Elids} between $\G_n$ and so-called bicolored grand Motzkin paths, which can be shown to send the degree of asymmetry to the number of horizontal steps, and then enumerating such paths by the number of horizontal steps.

However, here we give a different, self-contained proof. For $w\in\G_n$ and a binary length-2 word $ab$, let 
$$
o_{ab}(w)=\left|\{i\in[n]:w_{2i-1}w_{2i}=ab\}\right|,
$$
that is, the number of occurrences of the consecutive subword $ab$ starting at an odd position. Applying the transformation 
\begin{equation}\label{eq:zigzag}
\begin{array}{rccc} Z: & \G_n & \to & \G_n \\ & w_1w_2\dots w_{2n} & \mapsto & w_1w_{2n}w_2 w_{2n-1}\dots w_n w_{n+1},\end{array}
\end{equation}
which reads $w$ in zig-zag, the statistic $\da$ becomes the statistic $o_{10}+o_{01}$.

To find an expression for $$G(t,x)=\sum_{n\ge0}\sum_{w\in\G_n} t^{o_{10}(w)+o_{01}(w)} x^n,$$ we decompose words $w\in\G_n$ uniquely as follows. Each pair $w_{2i-1}w_{2i}$, for $i\in[n]$, must be one of $00$, $01$, $10$ or $11$. 
The substring of $w$ consisting of pairs $w_{2i-1}w_{2i}\in\{00,11\}$ (we call these {\em twin pairs}) can be viewed itself as a word in $\G_m$, for some $m\le n/2$, where each letter has been doubled, i.e., each $0$ has been replaced with $00$ and each $1$ with $11$. This substring contributes 
\begin{equation}\label{eq:double}
\frac{1}{\sqrt{1-4x^2}}
\end{equation} 
to the generating function $G(t,x)$. Between each twin pair and the next, and also to the left of the first twin pair and to the right of the last one, $w$ contains (possibly empty) sequences of pairs $01$ and $10$ (we call these {\em non-twin pairs}). Each such sequence contributes $\frac{1}{1-2tx}$ to the generating function. Inserting these sequences after each twin pair corresponds to replacing $x$ with $\frac{x}{1-2tx}$ in equation~\eqref{eq:double}, and inserting a sequence before the first twin pair creates an extra factor $\frac{1}{1-2tx}$. It follows that
$$G(t,x)=\frac{1}{1-2tx}\frac{1}{\sqrt{1-4\left(\dfrac{x}{1-2tx}\right)^2}}=\frac{1}{\sqrt{(1-2tx)^2-4x^2}},$$
which simplifies to the expression stated above.
\end{proof}

The argument in the above proof also yields the more general multivariate generating function:
\begin{equation}\label{eq:4variables}
\sum_{n\ge0}\sum_{w\in\G_n} x_{00}^{o_{00}(w)}x_{11}^{o_{11}(w)}x_{10}^{o_{10}(w)}x_{01}^{o_{01}(w)}=\frac{1}{\sqrt{(1-x_{10}-x_{01})^2-4x_{00}x_{11}}}.
\end{equation}

Another consequence of the proof of Theorem~\ref{thm:restricted} is a closed formula for the number of words in $\G_n$ with degree of asymmetry $k$. Assuming that $n-k$ is even (since otherwise the number of such words is $0$), we have
$$
|\{w\in\G_n:\da(w)=k\}|=|\{w\in\G_n:o_{10}(w)+o_{01}(w)=k\}|=\binom{n}{k}\binom{n-k}{\frac{n-k}{2}}2^k.
$$
Indeed, words $w\in\G_n$ with $o_{10}(w)+o_{01}(w)=k$ consist of $n-k$ twin pairs and $k$ non-twin pairs. We can choose the positions of the non-twin pairs in $\binom{n}{k}$ ways, decide whether each such pair is a $10$ or a $01$ in $2^k$ ways, and finally choose the substring of twin pairs ---which consists of a word in $\G_{\frac{n-k}{2}}$
where each letter is doubled--- in $\binom{n-k}{\frac{n-k}{2}}$ ways.

\begin{corollary}\label{cor:limitGtx}
The sequence of random variables $X_n$ giving the degree of asymmetry of a uniformly random word in $\G_n$ converges to a normal distribution with expected value and variance given by
$$\E X_n=\frac{n}{2}+O(1),\qquad \V X_n=\frac{n}{4}+O(1). $$
\end{corollary}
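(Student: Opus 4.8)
The plan is to run a singularity analysis on the bivariate generating function $G(t,x)$ of Theorem~\ref{thm:restricted}, treating $t$ as a parameter near $1$, and then to invoke the quasi-powers framework. The first step is to factor the quantity under the square root,
$$1-4tx+4(t^2-1)x^2=\bigl(1-2(t+1)x\bigr)\bigl(1-2(t-1)x\bigr),$$
so that $G(t,x)=\bigl[(1-2(t+1)x)(1-2(t-1)x)\bigr]^{-1/2}$. For $t$ in a neighborhood of $1$ the two singularities in $x$ sit at $1/(2(t+1))$ and $1/(2(t-1))$; the latter escapes to infinity as $t\to1$, so the dominant one is the square-root singularity at $\rho(t)=1/(2(t+1))$, with $\rho(1)=1/4$ recovering $G(1,x)=(1-4x)^{-1/2}$.

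Next I would isolate the local behavior at $\rho(t)$. Since the second factor is analytic and nonzero there, we have
$$G(t,x)\sim \frac{h(t)}{\sqrt{1-x/\rho(t)}},\qquad h(t)=\frac{1}{\sqrt{1-2(t-1)\rho(t)}}=\sqrt{\frac{t+1}{2}},$$
as $x\to\rho(t)$. The transfer theorem of singularity analysis then yields
$$[x^n]G(t,x)\sim h(t)\,\rho(t)^{-n}\,\frac{1}{\sqrt{\pi n}}=\sqrt{\frac{t+1}{2}}\,\bigl(2(t+1)\bigr)^n\frac{1}{\sqrt{\pi n}},$$
uniformly for $t$ in a complex neighborhood of $1$. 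Forming the probability generating function of $X_n$ as the ratio $[x^n]G(t,x)/[x^n]G(1,x)$, the $n^{-1/2}/\sqrt{\pi}$ factors cancel and we obtain the quasi-powers estimate $\E[t^{X_n}]\sim h(t)\,((t+1)/2)^n$, of the form $A(t)B(t)^n$ with $A(t)=\sqrt{(t+1)/2}$ and $B(t)=(t+1)/2$.

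Finally I would appeal to Hwang's quasi-powers theorem \cite[Thm.~IX.8]{FS} (the square-root counterpart of the meromorphic scheme used in Corollary~\ref{cor:limitCtz}). The variability condition holds since $B$ is non-constant, so $X_n$ is asymptotically normal, and its mean and variance are governed by $B$: with $\mu=B'(1)/B(1)=1/2$ and $\sigma^2=B''(1)/B(1)+\mu-\mu^2=1/4$ we recover $\E X_n=n/2+O(1)$ and $\V X_n=n/4+O(1)$. Heuristically, $((t+1)/2)^n$ is the probability generating function of a $\mathrm{Binomial}(n,1/2)$ variable, so $X_n$ behaves to leading order like the number of heads in $n$ fair coin flips, perturbed by the bounded analytic factor $h(t)$. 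The exact $O(1)$ terms could alternatively be extracted, as in Corollary~\ref{cor:limitCtz}, from the explicit count $|\{w\in\G_n:\da(w)=k\}|=\binom{n}{k}\binom{n-k}{(n-k)/2}2^k$.

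The main obstacle I anticipate is technical rather than conceptual: verifying that the local approximation at $\rho(t)$, and hence the coefficient asymptotics, hold \emph{uniformly} for $t$ over a fixed complex neighborhood of $1$, which is what legitimizes passing from the coefficient asymptotics to the quasi-powers estimate for the moment generating function. This amounts to checking that the secondary singularity $1/(2(t-1))$ stays bounded away from $\rho(t)$, that $h(t)$ remains analytic and nonvanishing, and that the error term in the transfer theorem is uniform.
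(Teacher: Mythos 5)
Your proposal is correct and follows essentially the same route as the paper: the same factorization $G(t,x)=\bigl[(1-2(t-1)x)(1-2(t+1)x)\bigr]^{-1/2}$, the same dominant movable singularity $\rho(t)=\tfrac{1}{2(t+1)}$, and the same function $B(t)=\rho(1)/\rho(t)=(t+1)/2$ driving the mean $n/2$ and variance $n/4$. The only difference is packaging: the paper cites the algebraic singularity schema \cite[Thm.\ IX.12]{FS} directly, whereas you unpack its proof by hand (uniform local expansion, transfer, then Hwang's quasi-powers theorem), which is the same argument in expanded form.
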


\begin{proof}
Factoring the denominator in Theorem~\ref{thm:restricted}, we can write
$$G(t,x)=\frac{1}{\sqrt{(1-2(t-1)x)(1-2(t+1)x)}}.$$
In a neighborhood of $t=1$, the singularity closest to the origin is at $x=\rho(t)=\frac{1}{2(t+1)}$. Applying \cite[Thm.\ IX.12]{FS} and letting $f(t)=\rho(1)/\rho(t)=(t+1)/2$, it follows that $X_n$ converges to a normal distribution with mean $f'(1)n+O(1)=n/2+O(1)$ and variance $(f''(1)+f'(1)-f'(1)^2)n+O(1)=n/4+O(1)$.
The mean and variance can also be computed directly as in equation~\eqref{eq:EV}.
\end{proof}

Comparing Corollary~\ref{cor:limitGtx} with equation~\eqref{eq:limitW} for $m=2$, we see that the asymptotic behavior of the degree of asymmetry on words in $\G_n$ is the same as on binary words of length $2n$ without restrictions.

Words in $\G_n$ can be interpreted as grand Dyck paths, by replacing $0$ and $1$ with steps $U=(1,1)$ and $D=(1,-1)$, respectively. 
With this interpretation, one could consider a different concept of symmetry, where the height of symmetrically positioned steps 
in the grand Dyck path is taken into account. This notion is studied in~\cite{Elids}, along with the degree of symmetry of other lattice paths.
It is interesting to compare the normal limit law in Corollary~\ref{cor:limitGtx} with the analogous distribution for this other notion of symmetry, which 
in \cite{Elids} is shown to converge to a Rayleigh distribution with mean $\sqrt{\pi n}/2$.

\subsection{A surprising connection to patterns}\label{sec:bijection}

Another statistic on $\G_n$ that is closely related to $\da$ is the number of symmetrically positioned pairs of zeros. Specifically, for $w\in\G_n$, let 
$$\sz(w)=\left|\{i\in[n]:w_i=w_{2n+1-i}=0\}\right|.$$
For $w\in\G_n$, we have
\begin{equation}\label{eq:dasz}
\da(w) + 2 \sz(w) = n,
\end{equation}
since each pair of symmetrically positioned entries $w_iw_{2n+1-i}$ must be one of $01$, $10$, $00$ or $11$. The number of pairs $01$ and $10$ is counted by the statistic $\da(w)$, while the number of pairs $00$,  which must equal the number of pairs $11$ since $w$ has the same number of zeros and ones, is counted by $\sz(w)$. An equivalent perspective is obtained by applying the zigzag tranformation $Z$ from equation~\eqref{eq:zigzag}, which maps $\sz$ into the statistic $o_{00}$ counting occurrences of the consecutive subword $00$ in odd positions.
 It follows that 
\begin{equation}\label{eq:UUD}
\sum_{n\ge0}\sum_{w\in\G_n} u^{\sz(w)} x^n=\sum_{n\ge0}\sum_{w\in\G_n} u^{o_{00}(w)} x^n=\frac{1}{\sqrt{1 - 4x + 4 (1-u) x^2}},
\end{equation}
where the last equality is obtained by setting $x_{00}=ux$ and $x_{11}=x_{10}=x_{01}=x$ in equation~\eqref{eq:4variables}, or alternatively by
using equation~\eqref{eq:dasz} and noting that the substitution $G(u^{-1/2},u^{1/2}x)$ in Theorem~\ref{thm:restricted} yields the right-hand side of equation~\eqref{eq:UUD}. The coefficients of this generating function appear as table A051288 in~\cite{OEIS}.

Interestingly, there is a third, seemingly unrelated statistic on $\G_n$ that also has the same distribution. 
For $w\in\G_n$, let $\#_{001}(w)$ denote the number of occurrences of the consecutive subword $001$ in $w$. Occurrences will always refer to consecutive subwords even if not stated explicitly.

\begin{theorem}\label{thm:001}
For all $n,k$,
$$|\{w\in\G_n:\sz(w)=k\}|=|\{w\in\G_n:o_{00}(w)=k\}|=|\{w\in\G_n:\#_{001}(w)=k\}|.$$
\end{theorem}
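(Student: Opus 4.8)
The first equality in the statement is already in hand: the zigzag map $Z$ of~\eqref{eq:zigzag} sends $\sz$ to $o_{00}$, so $\sz$ and $o_{00}$ are equidistributed on $\G_n$, with common generating function $\frac{1}{\sqrt{1-4x+4(1-u)x^2}}$ by~\eqref{eq:UUD}. The plan is therefore to show that $\#_{001}$ carries this same generating function, that is,
\begin{equation*}
\sum_{n\ge0}\sum_{w\in\G_n} u^{\#_{001}(w)}\,x^n=\frac{1}{\sqrt{1-4x+4(1-u)x^2}}.
\end{equation*}

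First I would count \emph{all} binary words (not only those in $\bigcup_n\G_n$) by their number of occurrences of $001$, while keeping separate track of zeros and ones. Introducing a variable $z$ for each $0$ and $w$ for each $1$, I would set up the Knuth--Morris--Pratt automaton detecting the factor $001$, whose three states record the longest suffix of the word read so far that is a prefix of $001$ (namely $\varepsilon$, $0$, or $00$), with a weight $u$ on each transition that completes an occurrence. Solving the resulting $3\times3$ linear system for the generating function of walks from the initial state yields the rational function
\begin{equation*}
F(z,w,u)=\frac{1}{1-z-w+(1-u)\,wz^2},
\end{equation*}
in which $[z^aw^bu^k]F$ counts binary words with $a$ zeros, $b$ ones, and exactly $k$ occurrences of $001$. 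Since $001$ has no nontrivial self-overlap, the Goulden--Jackson cluster method gives the same $F$ immediately, the only cluster being a single occurrence of weight $(u-1)z^2w$.

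The words of $\G_n$ are precisely those contributing to the diagonal coefficient $[z^nw^n]F$, so the target series is the diagonal $\sum_{n\ge0}\big([z^nw^n]F\big)x^n$. I would extract it by the standard residue method: writing the diagonal as $\frac{1}{2\pi i}\oint F(\zeta,x/\zeta,u)\,\frac{d\zeta}{\zeta}$ and clearing denominators, the integrand becomes $\frac{1}{2\pi i}\oint \frac{d\zeta}{-A\zeta^2+\zeta-x}$ with $A=1-(1-u)x$. This quadratic has one root near the origin inside the contour; its residue evaluates to $\frac{1}{\sqrt{1-4Ax}}$, and substituting $A=1-(1-u)x$ gives exactly $\frac{1}{\sqrt{1-4x+4(1-u)x^2}}$, as required. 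This settles the equidistribution of all three statistics.

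The computation above proves the identity rigorously but leaves the coincidence unexplained, and the most satisfying proof --- the one matching the ``surprising connection'' of the section heading --- would exhibit an explicit bijection $\bij\colon\G_n\to\G_n$ with $o_{00}(w)=\#_{001}(\bij(w))$ for every $w$, that is, one transporting the pairs $00$ in odd positions to occurrences of the sliding factor $001$. Constructing and verifying such a $\bij$ is the step I expect to be the main obstacle: the two statistics read off very different features of the word --- a rigid block structure anchored at even/odd positions versus a length-$3$ pattern that may appear anywhere --- so the map is unlikely to be a mere relabeling, and genuine care will be needed to confirm that it is well defined on $\G_n$ and matches the statistic exactly.
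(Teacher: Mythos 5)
Your proof is correct, but it reaches the generating function $\frac{1}{\sqrt{1-4x+4(1-u)x^2}}$ for $\#_{001}$ by a genuinely different route than the paper. The paper never leaves the world of balanced words: it starts from $\frac{1}{\sqrt{1-4yx}}$, where $y$ marks the zeros of a word in $\G_n$, and substitutes $y=1+vx$, so that each zero either stays put or is inflated into a \emph{marked} occurrence of $001$ (which keeps the word balanced, at size one larger); the substitution $v=u-1$ then converts ``marked subsets of occurrences'' into ``all occurrences'' by inclusion--exclusion. Your argument instead counts \emph{all} binary words by occurrences of $001$ via the transfer matrix / Goulden--Jackson cluster method --- where the cluster weight $(u-1)z^2w$ plays exactly the same inclusion--exclusion role as the paper's $v=u-1$, legitimized by the fact that $001$ has no nontrivial self-overlap --- and only at the end imposes the balance condition by extracting the diagonal $[z^nw^n]$ with a residue computation (I checked: the small root of $-A\zeta^2+\zeta-x$ with $A=1-(1-u)x$ does give residue $\frac{1}{\sqrt{1-4Ax}}$, and the resulting series matches $4+2u$ at $n=2$). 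What your route buys is generality: your $F(z,w,u)=\frac{1}{1-z-w+(1-u)wz^2}$ refines the count by the numbers of zeros and ones separately, so unbalanced words come for free; what the paper's route buys is brevity, avoiding both the automaton and the diagonal/contour machinery. One last remark: the bijection you flag at the end as the likely ``main obstacle'' is not needed for this theorem (your generating-function argument already closes the proof), and in fact the paper constructs exactly such a bijection separately (its Theorem~\ref{thm:bij}), sending $o_{00}$ to $\#_{001}$ and, as a bonus, $o_0$ to $\#_{01}$.
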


\begin{proof}
By equation~\eqref{eq:UUD}, it is enough to show that the generating function for words in $\G_n$ where the variable $u$ marks the number of occurrences of $001$ coincides with the right-hand-side of this equation.

Let us first compute the generating function for words in $\G_n$ where an arbitrary subset of the occurrences of $001$ have been selected, or {\em marked}. Formally, we are enumerating pairs $(w,S)$, where $w\in\G_n$ and $S$ is a subset of the occurrences of $001$ in $w$. We introduce a variable $v$ that keeps track of $|S|$, that is, the number of marked occurrences of $001$. This can be done by starting with the generating function $\frac{1}{\sqrt{1-4yx}}$ for words in $\G_n$ with a variable $y$ keeping track of occurrences of $0$, and then making the substitution $y=1+vx$ to allow each $0$ in the word to either remain unchanged or be replaced with a marked occurrence of $001$. Thus, the generating function for pairs $(w,S)$, with a variable $v$ that keeps track of $|S|$, is 
\begin{equation}\label{eq:marked001}
 \frac{1}{\sqrt{1-4x-4vx^2}}.
\end{equation}

Finally, in order to obtain the generating function for words in $\G_n$ with a variable $u$ that keeps track of {\em all} occurrences of $001$, we make the substitution $v=u-1$ in equation~\eqref{eq:marked001}.
The reason for this substitution is that, if $T$ is the set of all occurrences of $001$ in a given word $w$, then
$$
\sum_{S\subseteq T} v^{|S|}=(v+1)^{|T|}, \quad  \text{and so}\quad \sum_{S\subseteq T} (u-1)^{|S|}=u^{|T|}.
$$
This substitution gives 
$$\sum_{n\ge0}\sum_{w\in\G_n} u^{\#_{001}(w)} x^n=\frac{1}{\sqrt{1 - 4x + 4 (1-u) x^2}}.$$
\end{proof}

\subsection{A bijective proof of Theorem~\ref{thm:001}}

The above proof of Theorem~\ref{thm:001} is relatively clean, but not bijective. In fact, the methods used to obtain the generating functions with respect to the statistics $o_{00}$ and $\#_{011}$ are quite different from each other, and so there is no clear way to turn our proof into a bijective one.

In this subsection we present another proof of Theorem~\ref{thm:001} by providing an explicit bijection that maps the number of occurrences of $00$ in odd positions to the number of occurrences of $001$. As a bonus property, it also maps the number of zeros in odd positions (we denote this statistic by $o_{0}(w)$) to the number of occurrences of $01$. Our bijection appears to be new, and it is not based on the above generating function argument.
Note that the equidistribution of the statistics $s_{0}$ and $o_{00}$ already has a bijective proof via the map $Z$ from equation~\eqref{eq:zigzag}.

\begin{theorem}\label{thm:bij}
There is an explicit bijection $\bij:\G_n\to\G_n$ such that, if $w\in\G_n$ and $w'=\bij(w)$, then
$$o_{00}(w)=\#_{001}(\bij(w)) \quad \text{and} \quad o_{0}(w)=\#_{01}(\bij(w)).$$
\end{theorem}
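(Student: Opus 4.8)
The plan is to construct $\bij$ as an explicit, reversible encoding and to check directly that it transports the two statistics, after first recording structural reformulations of all four of them. On the target side, writing $w'\in\G_n$ as a concatenation of maximal runs, a short computation shows that $\#_{01}(w')$ equals the number of maximal runs of $0$'s that are immediately followed by a $1$, while $\#_{001}(w')$ equals the number of \emph{those} runs whose length is at least $2$. On the source side, I would split $w=w_1w_2\cdots w_{2n}$ into its odd-indexed subword $\alpha=w_1w_3\cdots w_{2n-1}$ and even-indexed subword $\beta=w_2w_4\cdots w_{2n}$; then $o_0(w)$ is the number of $0$'s in $\alpha$, while $o_{00}(w)$ is the number of indices $i$ with $\alpha_i=\beta_i=0$, and the pair $(\alpha,\beta)$ ranges over all pairs of length-$n$ binary strings whose total number of $0$'s is $n$.

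With these reformulations in hand, I would let the $0$'s of $\alpha$ prescribe the skeleton of $0$-runs of $w'=\bij(w)$ that are followed by a $1$: reading $\alpha$ from left to right, each $0$ of $\alpha$ becomes one such run, and the corresponding entry of $\beta$ fixes its type, namely a run of length $\ge 2$ when $\beta_i=0$ and a run of length exactly $1$ when $\beta_i=1$. By construction this forces $\#_{001}(\bij(w))=o_{00}(w)$ and $\#_{01}(\bij(w))=o_0(w)$, which is exactly the desired conclusion. The remaining entries of $(\alpha,\beta)$ --- the $1$'s of $\alpha$, and the $0$'s of $\beta$ sitting under $1$'s of $\alpha$ --- must then be encoded in the leftover letters of $w'$: the interior and leading $1$-runs, the padding that lengthens the ``long'' $0$-runs, and a possible trailing block of $0$'s not followed by a $1$, placed so as to consume exactly the $n$ available $0$'s and $n$ available $1$'s.

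The step I expect to be the main obstacle is precisely this bookkeeping of the leftover letters. The global balance constraint ($w'\in\G_n$) couples the placement of the padding to the run skeleton, so the encoding of the ``free'' entries of $(\alpha,\beta)$ into run lengths and into the trailing block must be designed to be simultaneously balance-preserving and invertible; in particular, the surplus $0$'s may only lengthen runs that are already long or feed the trailing block, never create a new $0$-run followed by a $1$, or else $\#_{01}$ and $\#_{001}$ would change. To prove bijectivity I would exhibit the inverse explicitly: scan the maximal runs of $w'$ to recover the skeleton of $0$-runs followed by a $1$ (hence the positions of the $0$'s of $\alpha$), read each run's length-type to recover $\beta$ on those positions, and decode the $1$-runs, the excess lengths, and the trailing block to recover the rest of $\alpha$ and $\beta$. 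Verifying that this decoding is well defined on all of $\G_n$ and inverts the construction is the crux; once it is in place the two displayed identities hold by design, and agreement with the enumeration in Theorem~\ref{thm:001} provides an independent consistency check. As a preliminary sanity check I would confirm the construction by hand for small $n$ (say $n\le 3$), where the joint distribution of $(o_{00},o_0)$ can be matched directly against that of $(\#_{001},\#_{01})$.
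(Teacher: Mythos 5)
Your setup coincides with the paper's: splitting $w$ into the pairs $w_{2i-1}w_{2i}$ (your $(\alpha_i,\beta_i)$) is exactly the paper's classification of each $i\in[n]$ into four sets $Q,R,S,T$ according to whether that pair is $00$, $11$, $01$ or $10$, and your skeleton rule --- each $i$ with $\alpha_i=0$ produces one maximal $0$-run followed by a $1$, of length at least $2$ when $\beta_i=0$ and exactly $1$ when $\beta_i=1$ --- is precisely the paper's assignment of $\mu_i=0^{2+\cdots}$ for $i\in Q$ and $\mu_i=0$ for $i\in S$ in the factorization $\bij(w)=\mu_1 1\mu_2 1\cdots \mu_n 1\mu_{n+1}$. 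With that rule in place, the two identities $o_{00}(w)=\#_{001}(\bij(w))$ and $o_0(w)=\#_{01}(\bij(w))$ do indeed hold by design, as you say.

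However, the proposal stops exactly where the content of the theorem lies, and this is a genuine gap. You correctly identify that the remaining data --- for the positions $i$ with $\alpha_i=1$, whether $\beta_i=1$ (the paper's $R$) or $\beta_i=0$ (the paper's $T$) --- must be packed into the excess lengths of the long runs and a trailing block of $0$'s, in a balance-preserving and invertible way; but you give no rule accomplishing this, nor any argument that one exists, and you label it ``the crux'' and ``the main obstacle.'' What you have is therefore a reduction of the theorem to its hard part, not a proof. The paper's resolution is a concrete order-based device: write $R=\{r_1<\cdots<r_k\}$ where $k=o_{00}(w)$ (the balance condition on $w$ forces $|R|=|Q|=k$), set $r_0=0$ and $r_{k+1}=n+1$, and let $d_j$ be the number of elements of $T$ strictly between $r_{j-1}$ and $r_j$. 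The long run attached to the $j$-th smallest element of $Q$ then gets length $2+d_j$, and the trailing block $\mu_{n+1}$ gets length $d_{k+1}$. This preserves balance, since the total number of $0$'s is $2k+|T|+|S|=|Q|+|R|+|S|+|T|=n$, and it is invertible: from $w'$ one reads off $S$ (runs of length $1$), $Q$ (runs of length at least $2$), and the numbers $d_1,\dots,d_{k+1}$ from the excess lengths and the trailing block, and then recovers the interleaving of $T$ and $R$ among the remaining positions (scanning them in increasing order: $d_1$ elements of $T$, then $r_1$, then $d_2$ elements of $T$, then $r_2$, and so on). The identity $|Q|=|R|$ is what makes the count of gaps ($k+1$) match the count of available slots ($k$ long runs plus the trailing block); this matching is the idea your proposal is missing, and without it (or an equivalent) the bijection is not constructed.
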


\begin{proof}
It will be convenient to describe the map $\bij:\G_n\to\G_n$ by passing through an intermediate set, denoted by $\Omega_n$, which consists of all 4-tuples $(Q,R,S,T)$ of pairwise disjoint (possibly empty) subsets of $[n]$ such that $Q\cup R\cup S\cup T=[n]$ and $|Q|=|R|$.

To each $w\in\G_n$, we associate $(Q,R,S,T)\in\Omega_n$ as follows. 
For $i\in[n]$, place $i$ in $Q$, $R$, $S$ or $T$ according to whether the pair $w_{2i-1}w_{2i}$ equals $00$, $11$, $01$, or $10$, respectively.
This map is clearly a bijection between $\G_n$ and $\Omega_n$. Additionally, if we let $k=o_{00}(w)$, then $|Q|=|R|=k$. 

Now write the elements of $Q$ as $q_1<q_2<\dots<q_k$, and the elements of $R$ as $r_1<r_2<\dots<r_k$. Set $r_0=0$ and $r_{k+1}=n+1$, and let $d_j=|\{i\in T: r_{j-1}<i<r_j\}$ for all $j\in[k+1]$. Finally, define $$\bij(w)=\mu_1 1 \mu_2 1 \dots \mu_n 1 \mu_{n+1},$$ where $\mu_{q_j}=0^{2+d_j}$ for all $j\in[k]$, $\mu_{n+1}=0^{d_{k+1}}$, $\mu_i=0$ for all $i\in S$, and $\mu_i$ is empty for all $i\in R\cup T$.

The resulting word $\bij(w)$ is in $\G_n$, since its number of zeros is 
$$(2+d_1)+(2+d_2)+\dots+(2+d_k)+d_{k+1}+|S|=2k+|T|+|S|=|Q|+|R|+|T|+|S|=n.$$
The number of occurrences of $001$ in $\bij(w)$ is precisely $|Q|=k=o_{00}(w)$.
Additionally, the number of occurrences of $01$ in $\bij(w)$ equals $|Q|+|S|$, which is the number of indices $i\in[n]$ such that $w_{2i-1}=0$, namely $o_0(w)$.

To see that $\bij$ is a bijection, let us show how to reconstruct the 4-tuple $(Q,R,S,T)\in\Omega_n$ given a word $\mu_1 1 \mu_2 1 \dots \mu_n 1 \mu_{n+1}\in\G_n$. For $i\in[n]$, clearly $i\in S$ if and only if $\mu_i=0$, and $i\in Q$ if and only if $\mu_i$ has length at least $2$. The lengths of the words $\mu_i$ for $i\in Q$ can be used to recover the sequence $d_j$, which then determines how to place the elements of $[n]\setminus(Q\cup S)$ into the two disjoint subsets $R$ and $T$.
\end{proof}

\begin{example}\label{ex:bij}
If $w=\textcolor{blue}{01}\,\textcolor{violet}{11}\,\textcolor{red}{00}\,\textcolor{red}{00}\,\textcolor{teal}{10}\,\textcolor{violet}{11}\,\textcolor{blue}{01}\,\textcolor{teal}{10}\,\textcolor{violet}{11}\,\textcolor{red}{00}\,\textcolor{blue}{01}\in\G_{11}$, then $Q=\{\textcolor{red}{3},\textcolor{red}{4},\textcolor{red}{10}\}$, $R=\{\textcolor{violet}{2},\textcolor{violet}{6},\textcolor{violet}{9}\}$, $S=\{\textcolor{blue}{1},\textcolor{blue}{7},\textcolor{blue}{11}\}$, and $T=\{\textcolor{teal}{5},\textcolor{teal}{8}\}$.
We have $d_1=0$, $d_2=1$, $d_3=1$, $d_4=0$, and so $\mu_{\textcolor{red}{3}}=\textcolor{red}{00}$, $\mu_{\textcolor{red}{4}}=\textcolor{red}{000}$, and $\mu_{\textcolor{red}{10}}=\textcolor{red}{000}$. Thus,
$$\bij(w)=\textcolor{blue}{0}\, 1\, 1\, \textcolor{red}{00}\, 1\, \textcolor{red}{000}\, 1\, 1\, \textcolor{blue}{0}\, 1\, 1\, 1\, \textcolor{red}{000}\, 1\, \textcolor{blue}{0}\, 1.$$
This example is illustrated in Figure~\ref{fig:bij}
\end{example}

\newcommand\N{-- ++(0,1)}
\newcommand\EE{-- ++(1,0)}
\newcommand\U{-- ++(.5,.5) circle (0.05)}
\newcommand\D{-- ++(.5,-.5) circle (0.05)}
\newcommand\RR{++(.5,0)}
\newcommand\dS{\filldraw[ultra thick,blue,dotted] (c)\U\D coordinate (c);}
\newcommand\dR{\filldraw[ultra thick,violet] (c)\D\D coordinate (c);}
\newcommand\dQ{\filldraw[ultra thick,red,dashed] (c)\U\U coordinate (c);}
\newcommand\dT{\filldraw[ultra thick,teal] (c)\D\U coordinate (c);}
\newcommand\lS{\draw[blue] (c)\RR node{$S$}\RR coordinate (c);}
\newcommand\lR{\draw[violet] (c)\RR node{$R$}\RR coordinate (c);}
\newcommand\lQ{\draw[red] (c)\RR node{$Q$}\RR coordinate (c);}
\newcommand\lT{\draw[teal] (c)\RR node{$T$}\RR coordinate (c);}
\newcommand\sS{\draw[ultra thick,blue,dotted] (c)\N coordinate (c);
\draw[ultra thick] (c)\EE coordinate (c);}
\newcommand\sR{\fill[thick,violet] (c) circle (.1);
\draw[ultra thick] (c)\EE coordinate (c);}
\newcommand\sQ[1]{
\foreach \i in {1,...,#1}{ \draw[ultra thick,red,dashed] (c)\N coordinate (c);}
\draw[ultra thick] (c)\EE coordinate (c);}
\newcommand\sT{\fill[thick,teal] (c) circle (.1);
\draw[ultra thick] (c)\EE coordinate (c);}

\begin{figure}[htb]
\centering
\begin{tikzpicture}[scale=.9]

\coordinate (c) at (0,12.7);
\dS\dR\dQ\dQ\dT\dR\dS\dT\dR\dQ\dS

\coordinate (c) at (-.5,11.3);
\lS\lR\lQ\lQ\lT\lR\lS\lT\lR\lQ\lS

\draw[ultra thin,lightgray] (0,0) grid (11,11);

\coordinate (c) at (0,0);
\sS\sR\sQ{2}\sQ{3}\sT\sR\sS\sT\sR\sQ{3}\sS

\draw [violet,decorate,decoration={brace,amplitude=3pt},yshift=-2pt]
(.9,11) -- (-.3,11) node [midway,below,teal] {$d_1=0$};
\draw [violet,decorate,decoration={brace,amplitude=3pt},yshift=-2pt]
(4.9,11) -- (1.1,11) node [midway,below,teal] {$d_2=1$};
\draw [violet,decorate,decoration={brace,amplitude=3pt},yshift=-2pt]
(7.9,11) -- (5.1,11) node [midway,below,teal] {$d_3=1$};
\draw [violet,decorate,decoration={brace,amplitude=3pt},yshift=-2pt]
(10.3,11) -- (8.1,11) node [midway,below,teal] {$d_4=0$};

\draw [decorate,decoration={brace,amplitude=3pt},xshift=-3pt,yshift=0pt]
(2,1) -- (2,3) node [midway,left] {$2+\textcolor{teal}{d_1}$};
\draw [decorate,decoration={brace,amplitude=3pt},xshift=-3pt,yshift=0pt]
(3,3) -- (3,6) node [midway,left] {$2+\textcolor{teal}{d_2}$};
\draw [decorate,decoration={brace,amplitude=3pt},xshift=-3pt,yshift=0pt]
(9,7) -- (9,10) node [midway,left] {$2+\textcolor{teal}{d_3}$};

\end{tikzpicture}
\caption{An illustration of the bijection $\bij$ from Theorem~\ref{thm:bij} applied to the word $w$ from Example~\ref{ex:bij}. Zeros and ones in $w$ are represented with up and down steps in the top path. Zeros and ones in $\bij(w)$ are represented by north and east steps in the bottom path.}
\label{fig:bij}
\end{figure}
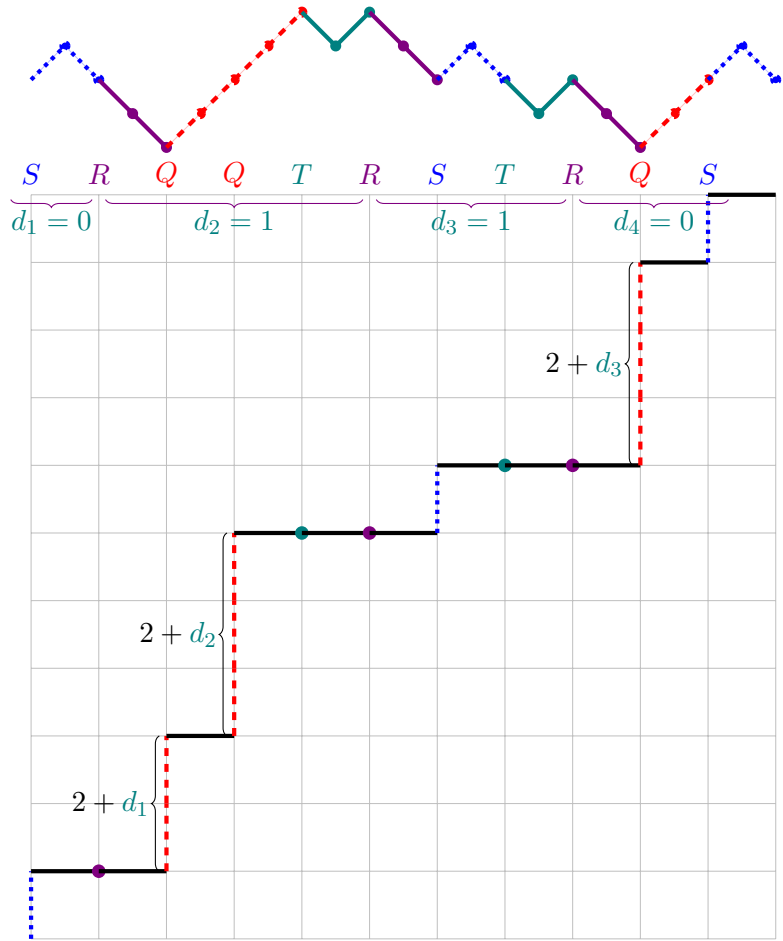

We conclude this section with a simple formula for the cardinality of the sets in Theorem~\ref{thm:001}. Compare this with the cumbersome triple-summation formula that appears  in~\cite[Cor.\ 3.3]{MSTT} for the number of words in $\G_n$ with a given number of occurrences of $001$. 

\begin{corollary}
The number of words in $\G_n$ with $k$ occurrences of $001$ equals
$$\binom{n}{2k}\binom{2k}{k}2^{n-2k}.$$
The number of words in $\G_n$ with $k$ occurrences of $001$ and $\ell$ occurrences of $01$ equals
$$\binom{n}{2k}\binom{2k}{k}\binom{n-2k}{\ell-k}=\frac{n!}{k!^2(\ell-k)!(n-k-\ell)! }.$$
\end{corollary}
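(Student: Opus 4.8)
The plan is to extract the closed formula directly from the generating functions already established, rather than attempting a fresh bijective count. By Theorem~\ref{thm:001}, the three statistics $\sz$, $o_{00}$, and $\#_{001}$ are equidistributed on $\G_n$, so it suffices to count words with $o_{00}(w)=k$ (or $\sz(w)=k$), whichever is easier. The cleanest route is to read off $[u^k x^n]$ from equation~\eqref{eq:UUD}, namely
$$
\frac{1}{\sqrt{1-4x+4(1-u)x^2}}.
$$
First I would rewrite the radicand to isolate the dependence on $u$: setting $v=u-1$ as in the proof of Theorem~\ref{thm:001}, the expression becomes $\bigl(1-4x-4vx^2\bigr)^{-1/2}$, whose coefficient of $v^j x^n$ counts words with $j$ \emph{marked} occurrences, and one then passes back to $u$ via the binomial relation $\sum_{S\subseteq T}(u-1)^{|S|}=u^{|T|}$. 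A more direct approach is to expand $(1-4x-4vx^2)^{-1/2}$ using the generalized binomial series in the single variable $(4x+4vx^2)$ and collect the relevant power.

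The key computation is the following. Writing $(1-4x-4vx^2)^{-1/2}=\sum_{p\ge0}\binom{-1/2}{p}(-1)^p(4x+4vx^2)^p$ and then expanding $(4x+4vx^2)^p=\sum_{j}\binom{p}{j}(4x)^{p-j}(4vx^2)^{j}=\sum_j \binom{p}{j}4^{p}v^j x^{p+j}$, I would extract the coefficient of $v^k x^n$, which forces $p+k=n$, i.e.\ $p=n-k$; using the standard identity $\binom{-1/2}{p}(-1)^p 4^p=\binom{2p}{p}$ one gets that the number of pairs $(w,S)$ with $|S|=k$ equals $\binom{n-k}{k}\binom{2(n-k)}{n-k}$. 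Then the substitution $v=u-1$ converts marked counts into genuine occurrence counts. The main obstacle, and the step I would spend the most care on, is correctly navigating this marked-to-unmarked inversion and the index bookkeeping so that the final coefficient comes out as $\binom{n}{2k}\binom{2k}{k}2^{n-2k}$; one should verify that the binomial identities collapse the two forms (the $(n-k,k)$ indexing from the marking argument versus the $(n,2k)$ indexing in the statement) into the same quantity.

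For the refined two-variable count, I would instead work from the multivariate generating function~\eqref{eq:4variables}, or equivalently from the bijection $\bij$ of Theorem~\ref{thm:bij}, which sends $o_{00}\mapsto\#_{001}$ and $o_0\mapsto\#_{01}$ simultaneously. Using $\bij$ the count becomes transparent: a word with $\#_{001}=k$ and $\#_{01}=\ell$ corresponds under $\bij^{-1}$ to a $4$-tuple $(Q,R,S,T)\in\Omega_n$ with $|Q|=|R|=k$ and $|Q|+|S|=\ell$, hence $|S|=\ell-k$ and $|T|=n-k-\ell$. The number of such tuples is the multinomial $\binom{n}{k,k,\ell-k,n-k-\ell}=\frac{n!}{k!^2(\ell-k)!(n-k-\ell)!}$, which is exactly the stated right-hand side, and summing over $\ell$ (or directly regrouping the multinomial) recovers the single-variable formula $\binom{n}{2k}\binom{2k}{k}2^{n-2k}$. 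The only thing to check here is that the constraints on $\Omega_n$ are precisely $|Q|=|R|$ together with the disjoint-union condition, so that the choices of $Q,R,S,T$ are free multinomial selections; this is immediate from the definition of $\Omega_n$ in the proof of Theorem~\ref{thm:bij}.
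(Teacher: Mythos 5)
Your second argument --- pulling back to $4$-tuples $(Q,R,S,T)\in\Omega_n$ via the bijection $\bij$ of Theorem~\ref{thm:bij}, with $|Q|=|R|=k$ and $|S|=\ell-k$, and counting them by the multinomial coefficient --- is complete, correct, and is precisely the paper's own proof: the paper chooses $Q\cup R$ in $\binom{n}{2k}$ ways, splits it in $\binom{2k}{k}$ ways, and places the remaining $n-2k$ elements into $S$ or $T$ (in $2^{n-2k}$ ways for the first formula, or $\binom{n-2k}{\ell-k}$ ways when $|S|$ is prescribed). Since, as you note, summing over $\ell$ recovers the one-variable formula, this paragraph alone proves both statements of the corollary, by the same route as the paper.

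The generating-function route you lead with, however, does not close as written, and you should not bill it as the cleanest path. The coefficient extraction $[v^kx^n]\,(1-4x-4vx^2)^{-1/2}=\binom{n-k}{k}\binom{2n-2k}{n-k}$ is correct, but because you extracted \emph{before} substituting $v=u-1$, what remains is the alternating sum
$$
|\{w\in\G_n:\#_{001}(w)=k\}|=\sum_{j\ge k}(-1)^{j-k}\binom{j}{k}\binom{n-j}{j}\binom{2n-2j}{n-j},
$$
and showing that this collapses to $\binom{n}{2k}\binom{2k}{k}2^{n-2k}$ is a genuine hypergeometric identity --- not ``index bookkeeping'' --- whose direct proof is about as much work as the original problem. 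The fix is to substitute first and extract afterwards: write the radicand of \eqref{eq:UUD} as $(1-2x)^2-4ux^2$, so that
$$
\frac{1}{\sqrt{(1-2x)^2-4ux^2}}=\sum_{k\ge0}\binom{2k}{k}\frac{u^kx^{2k}}{(1-2x)^{2k+1}},
$$
whence $[u^kx^n]$ equals $\binom{2k}{k}[x^{n-2k}](1-2x)^{-(2k+1)}=\binom{2k}{k}\binom{n}{2k}2^{n-2k}$ with no inversion needed. That said, since your bijective paragraph already yields both formulas, the analytic detour is redundant and could simply be dropped.
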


\begin{proof}
By Theorem~\ref{thm:bij}, the first problem is equivalent to counting 4-tuples $(Q,R,S,T)\in\Omega_n$ with $|Q|=|R|=k$ (or equivalently, words 
$w\in\G_n$ with $o_{00}(w)=k$). There are $\binom{n}{2k}$ ways to choose the elements in $Q\cup R$, $\binom{2k}{k}$ ways to separate them into two sets $Q$ and $R$, and  $2^{n-2k}$ ways to place the remaining $n-2k$ elements into $S$ or $T$.
If, additionally, we require $|S|=\ell-k$, then the number of ways to choose the elements in $S$ is $\binom{n-2k}{\ell-k}$.
\end{proof}

By Theorem~\ref{thm:bij}, the generating function for words in $\G_n$ with variables $u$ and $v$ keeping track of the number of occurrences of $001$ and $01$, respectively,  is obtained by setting $x_{10}=x$, $x_{01}=vx$,  $x_{00}=uvx$,  $x_{11}=x$ in equation~\eqref{eq:4variables}:
$$\sum_{n\ge0}\sum_{w\in\G_n} u^{\#_{001}(w)} v^{\#_{01}(w)} x^n=\sum_{n\ge0}\sum_{w\in\G_n} u^{o_{00}(w)} v^{o_{0}(w)} x^n=\frac{1}{\sqrt{(1 - x-vx)^2 - 4 uv x^2}}.$$

\section{Binary trees}\label{sec:trees}

A {\em binary tree} is a rooted tree where each node has at most two children, referred to as the left child and the right child.
It is well known that the number of binary trees with $n$ nodes is the $n$th Catalan number, and so the generating function for binary trees where $x$ marks the number of nodes is 
\begin{equation}
\label{eq:Cat}
\Cat(x)=\frac{1-\sqrt{1-4x}}{2x}.
\end{equation}

The position of each node in a binary tree $T$ can be specified by the word over $\{L,R\}$ that describes the sequence of left and right children in the path from the root to that node. We call this word the {\em address} of the node, and define its {\em complement} to be the word obtained by switching $L$s with $R$s. 
We say that two nodes in $T$ are {\em mirror images} if their addresses are complements of each other.
Define the {\em degree of asymmetry} of a binary tree, denoted by $\da(T)$, to be the number of nodes that are not mirror images of any node in the tree. See Figure~\ref{fig:binarytree} for an example.

\begin{figure}[htb]
\centering
 \begin{tikzpicture}[scale=1]
     \draw[thick] (0,0) coordinate(d0) -- (-2,-1) coordinate(d1) -- (-3,-2) coordinate(d2) -- (-2.5,-3) coordinate(d6);
     \draw[thick] (d1) -- (-1,-2) coordinate(d3) -- (-1.5,-3) coordinate(d4); 
      \draw[thick] (d3) -- (-.5,-3) coordinate(d5); 
           \draw[thick] (d0) -- (2,-1) coordinate(d7) -- (3,-2) coordinate(d8) -- (3.5,-3) coordinate(d9) -- (3.25,-4) coordinate(d11);
             \draw[thick] (d8) -- (2.5,-3) coordinate(d10); 
      \foreach \x in {0,...,11} {
        \draw[fill] (d\x) circle (2pt);
      }
       \foreach \x in {3,4,5,9,11} {
        \draw[red,thick] (d\x) circle (4pt);
      }
        \end{tikzpicture}
      \caption{A binary tree $T$ with $\da(T)=5$. The 5 nodes without mirror images are circled in red.}
      \label{fig:binarytree}
\end{figure}

Let $B(t,x)$ be the ordinary generating function for binary trees where $t$ marks the degree of asymmetry.

\begin{theorem}\label{thm:binary}
$$B(t,x)=1+\frac{1-\sqrt{1+4x\left(x-\frac{1-\sqrt{1-4tx}}{t}\right)}}{2x}$$
\end{theorem}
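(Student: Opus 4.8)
The plan is to decompose a binary tree at its root and track how the degree of asymmetry is inherited by the two subtrees. The key preliminary observation is that the root, having the empty address, is its own complement and hence always a mirror image of itself, so the root never contributes to $\da(T)$. Moreover, if a node lies in the left subtree with address $La$ (where $a$ is its address within that subtree), its mirror image has address $R\bar a$ and must therefore lie in the right subtree, at internal address $\bar a$. Writing $L$ and $R$ for the left and right subtrees and letting $R'=\overline R$ denote the tree obtained from $R$ by swapping left and right children throughout, a left-subtree node at internal address $a$ is a mirror image precisely when $a$ is an address of both $L$ and $R'$. Consequently the number of matched nodes in the left subtree equals $M:=|L\cap R'|$, the size of the common subtree (intersection of the two address sets) of $L$ and $R'$; by the symmetric argument the right subtree also has exactly $M$ matched nodes. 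Since $|R'|=|R|$, this yields the clean formula $\da(T)=|L|+|R|-2M$ for every nonempty $T$.

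First I would recast the whole count in terms of pairs of binary trees. Because $R\mapsto\overline R$ is a size-preserving involution, summing over $R$ is the same as summing over $R'$, so I define
$$F(t,x)=\sum_{(L,R')} t^{\,|L|+|R'|-2|L\cap R'|}\,x^{\,|L|+|R'|},$$
the sum ranging over all ordered pairs $(L,R')$ of binary trees, and claim $B(t,x)=1+xF(t,x)$ (the $1$ is the empty tree, and the factor $x$ accounts for the root). In this weighting, each of the $M$ common nodes contributes $t^0x^2$ (it is counted once in $|L|$ and once in $|R'|$ and is matched in both), while each remaining unmatched node contributes $tx$.

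Next I would derive a functional equation for $F$ by splitting on whether $L$ and $R'$ are empty. If both are empty we get the term $1$. If exactly one is empty then $L\cap R'=\varnothing$ and the nonempty tree contributes $(tx)^{(\cdot)}$ per node, summing to $\Cat(tx)-1$; the two symmetric cases give $2(\Cat(tx)-1)$. If both are nonempty, their roots both lie in $L\cap R'$, contributing the factor $x^2$, and the left (resp.\ right) subtrees of $L$ and of $R'$ form an independent pair of the same type whose intersection is exactly the corresponding subtree of $L\cap R'$; each such pair is again counted by $F$. This gives
$$F=2\Cat(tx)-1+x^2F^2.$$

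Finally I would solve this quadratic, taking the branch with $F(t,0)=2\Cat(0)-1=1$, to obtain $F=\bigl(1-\sqrt{1-4x^2(2\Cat(tx)-1)}\,\bigr)/(2x^2)$ and hence $B=1+\tfrac{1}{2x}\bigl(1-\sqrt{1-4x^2(2\Cat(tx)-1)}\,\bigr)$. A short simplification using $\Cat(tx)=(1-\sqrt{1-4tx})/(2tx)$ shows that $1-4x^2(2\Cat(tx)-1)=1+4x\bigl(x-(1-\sqrt{1-4tx})/t\bigr)$, which is precisely the radicand in the statement. The main obstacle is the combinatorial bookkeeping of the first two steps: one must argue carefully that matched nodes occur in mirror pairs (one in each subtree), so that $\da$ reduces to $|L|+|R|-2|L\cap R'|$, and that the intersection $L\cap R'$ decomposes recursively in lockstep with $L$ and $R'$, since this lockstep is exactly what makes the ``both nonempty'' term factor as $x^2F^2$.
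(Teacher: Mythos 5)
Your proposal is correct and follows essentially the same route as the paper: the paper's proof also reflects the right subtree of the root, reduces to counting ordered pairs of trees with $t$ marking the nodes having no same-address partner (your $F$ is exactly the paper's $D$), derives the identical functional equation $D=1+2(\Cat(tx)-1)+x^2D^2$, and recovers $B=1+xD$. Your explicit bookkeeping via $\da(T)=|L|+|R|-2|L\cap R'|$ is just a more detailed justification of the step the paper states in one sentence.
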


\begin{proof}
Let $D(t,x)$ be the generating function for pairs of binary trees, where $x$ marks the total number of nodes, and $t$ marks the number of nodes on either tree for which there is no node in the other tree with the same address; let us call these {\em unique} nodes. 

To find an equation for $D(t,x)$, consider three possibilities for a pair of trees: both trees are empty, one tree is empty but the other is not, or both trees are non-empty.
When one tree is empty but the other is not, the latter one contributes a term $\Cat(tx)-1$, since all of its nodes are unique. When both trees are non-empty, they contribute $x^2D(t,x)^2$, since the left subtrees of the two roots form themselves a pair of binary trees where the variable $t$ marks the number of unique nodes, and so do the right subtrees of the two roots. It follows that
$$D(t,x)=1+2(\Cat(tx)-1)+x^2D(t,x)^2,$$
and solving for $D(t,x)$ we get
\begin{equation}\label{eq:D}
D(t,x)=\frac{1-\sqrt{1+4x^2(1-2\Cat(tx))}}{2x^2}.
\end{equation}

Next we relate $B(t,x)$ and $D(t,x)$. Given a non-empty binary tree, consider the pair of trees consisting of the left subtree of the root and the reflection (over a vertical line) of the right subtree of the root. Pairs of nodes that were mirror images in the original tree become pairs of nodes with the same address in the resulting pair of trees. Thus, the degree of asymmetry of the original tree equals the number of unique nodes in the resulting pair of trees. It follows that $B(t,x)=1+xD(t,x)$. Using equations~\eqref{eq:D} and~\eqref{eq:Cat}, we obtain the stated expression for $B(t,x)$.
\end{proof}

By construction, $B(1,x)=C(x)$. While this is not immediately clear from the expression in Theorem~\ref{thm:binary}, it can be checked using the identity
$$\sqrt{1-4x+4x^2+4x\sqrt{1-4tx}}=\sqrt{(2x+\sqrt{1-4x})^2}=2x+\sqrt{1-4x}.$$
On the other hand, setting $t=0$ in $B(t,x)$, we obtain the generating function for symmetric binary trees
$$B(0,x)=1+\frac{1-\sqrt{1-4x^2}}{2x}=1+x\Cat(x^2).$$

To study the asymptotic behavior of the degree of asymmetry, we will consider a complementary statistic, namely the number of pairs of nodes (not counting the root) that are mirror images of each other. This statistic, which we call the {\em degree of symmetry}, has a more natural limiting distribution, as we will see next. If $T$ is a binary tree with $n$ nodes and
$\ds(T)$ denotes its degree of symmetry, then $2\ds(T)+\da(T)+1=n$. 
Let $\overline{B}(s,x)$ be the generating function for binary trees where the variable $s$ marks the degree of symmetry. A slight modification of the proof of Theorem~\ref{thm:binary} shows that $\overline{B}(s,x)=1+x \overline{D}(s,x)$, where $\overline{D}(s,x)=1+2(\Cat(x)-1)+sx^2\overline{D}(s,x)^2$, from where
\begin{equation}\label{eq:oB}
\overline{B}(s,x)=1+\frac{1-\sqrt{1+4sx\left(x-1+\sqrt{1-4x}\right)}}{2sx}.
\end{equation}
When $s=0$, we get
$$\overline{B}(0,x)=1-x+2xC(x)=2-x-\sqrt{1-4x},$$
whose coefficients for $n\ge2$ are doubled Catalan numbers. Indeed, binary trees $T$ with $\ds(T)=0$ are those where one of the two subtrees of the root is empty.

\begin{corollary}\label{cor:limitBsx}
The sequence of random variables $X_n$, giving the degree of symmetry of a uniformly random binary tree with $n$ nodes, converges to a discrete limit law with
\begin{equation}\label{eq:discretelimit}
\lim_{n\to\infty}\Pr(X_n=k)=\frac{1}{2}\binom{2k}{k}\left(\frac{3}{16}\right)^k
\end{equation}
for all $k\ge0$.
Expected value and variance are given by
$$\E X_n=\frac{3}{2}+o(1),\qquad \V X_n=6+o(1). $$
\end{corollary}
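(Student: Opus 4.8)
The plan is to treat Corollary~\ref{cor:limitBsx} as an application of singularity analysis to the bivariate generating function $\overline B(s,x)$ from~\eqref{eq:oB}, exploiting the fact that the degree of symmetry is a \emph{localized} statistic: unlike the degree of asymmetry it does not grow with $n$, so I expect a \emph{fixed} dominant singularity and hence a discrete limit law rather than a Gaussian one. Since $\overline B(1,x)=C(x)$, we have $\E[s^{X_n}]=[x^n]\overline B(s,x)/[x^n]C(x)$ (the analogue of~\eqref{eq:Xn}), and the goal is to compute the limiting probability generating function
$$P(s):=\lim_{n\to\infty}\E\!\left[s^{X_n}\right]=\lim_{n\to\infty}\frac{[x^n]\overline B(s,x)}{[x^n]C(x)}$$
as a ratio of the leading singular coefficients of $\overline B(s,x)$ and $C(x)$ at their common dominant singularity.

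First I would locate that singularity. For $s$ in a complex neighborhood of $1$, the inner radical $\sqrt{1-4x}$ in~\eqref{eq:oB} forces a branch point at $x=\tfrac14$, and the only competing singularities are the zeros of the outer radicand $1+4sx\bigl(x-1+\sqrt{1-4x}\bigr)$. Evaluating the outer radicand at $x=\tfrac14$ gives $1-\tfrac{3s}{4}$, which equals $\tfrac14\neq0$ at $s=1$. Checking that it stays bounded away from $0$ on the disk $|x|\le\tfrac14$ for $s$ near $1$ shows that $x=\rho=\tfrac14$ is the dominant singularity and that it is of square-root type. This uniform nonvanishing of the outer radicand is the main obstacle: it is precisely what guarantees the singularity is fixed (the source of the discrete limit) rather than moving (which would instead give a normal law, as in Corollaries~\ref{cor:limitCtz} and~\ref{cor:limitGtx}).

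Next I would compute the singular expansion. Setting $\varepsilon=\sqrt{1-4x}$, one substitutes $x=(1-\varepsilon^2)/4$ and finds that the outer radicand equals $\bigl(1-\tfrac{3s}{4}\bigr)+s\varepsilon+O(\varepsilon^2)$, so that near $x=\tfrac14$
$$\overline B(s,x)=a(s)-\frac{1}{\sqrt{1-3s/4}}\,\sqrt{1-4x}+O(1-4x),$$
with $a(s)$ analytic at the singularity. Since $C(x)=2-2\sqrt{1-4x}+O(1-4x)$, the transfer theorem \cite[Ch.\ VI]{FS} gives $[x^n]\overline B(s,x)/[x^n]C(x)\to\tfrac12(1-3s/4)^{-1/2}$, hence $P(s)=\tfrac12\bigl(1-\tfrac{3s}{4}\bigr)^{-1/2}$. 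Expanding via $(1-y)^{-1/2}=\sum_{k\ge0}\binom{2k}{k}(y/4)^k$ with $y=3s/4$ immediately yields $p_k:=\lim_{n\to\infty}\Pr(X_n=k)=\tfrac12\binom{2k}{k}(3/16)^k$, matching~\eqref{eq:discretelimit}; the continuity theorem for probability generating functions upgrades this coefficientwise convergence to convergence in distribution.

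Finally, for the moments I would note that $P(s)$ is analytic at $s=1$ (its radius of convergence is $\tfrac43$), so the coefficient asymptotics hold uniformly for $s$ in a complex neighborhood of $1$, and the moments of $X_n$ converge to those of the limit law. Differentiating then gives $\E X_n\to P'(1)=\tfrac32$ and $\V X_n\to P''(1)+P'(1)-P'(1)^2=6$, as claimed; alternatively, both can be obtained directly from $[x^n]\overline B_s(1,x)$ and $[x^n]\overline B_{ss}(1,x)$ exactly as in~\eqref{eq:EV}. The $o(1)$ error terms come from the subleading corrections in the singular expansion of $\overline B$.
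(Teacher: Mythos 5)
Your proof is correct: the local expansion at $x=1/4$, the limiting probability generating function $P(s)=\tfrac12(1-3s/4)^{-1/2}$, its coefficients $\tfrac12\binom{2k}{k}(3/16)^k$, and the moment values $3/2$ and $6$ all check out. However, your technical vehicle differs from the paper's. The paper isolates $F(s,x)=-\sqrt{1+4sx\left(x-1+\sqrt{1-4x}\right)}$, recognizes it as a composition $g(sh(x))$ with $g(x)=-\sqrt{1-4x}$ and $\tau_h=3/16<1/4=\rho_g$, and quotes the subcritical-composition schema \cite[Prop.\ IX.1]{FS}, which outputs the limit probabilities directly as $k\tau_h^{k-1}[x^k]g(x)/g'(\tau_h)$ (an index shift then converts statements about $F$ into statements about $\overline{B}$). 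You instead expand $\overline{B}(s,x)$ by hand and pass through the limiting PGF. The underlying mathematics is the same --- your ``outer radicand bounded away from $0$'' condition is exactly the paper's subcriticality $\tau_h<\rho_g$, and $1-3s/4=1-4s\tau_h$ --- but your version is more self-contained and yields a closed-form PGF from which probabilities and moments follow by calculus, while the paper's is shorter given the citation and gets the requisite uniformity for free.

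Two details in your sketch deserve attention, though neither is a genuine gap. First, the nonvanishing of the radicand on $|x|\le 1/4$ at $s=1$ follows most easily from the perfect-square identity the paper records after Theorem~\ref{thm:binary}: at $s=1$ the radicand equals $\left(2x+\sqrt{1-4x}\right)^2$, and $\sqrt{1-4x}=-2x$ is impossible on the closed disk for the principal branch; continuity then covers $s$ near $1$. Second, your moment argument needs locally uniform (in complex $s$) convergence of $\E\left[s^{X_n}\right]$ to $P(s)$, i.e.\ transfer error terms uniform in $s$; this is true but is precisely what \cite[Prop.\ IX.1]{FS} packages. Your stated fallback --- computing $\E X_n$ and $\V X_n$ directly from $[x^n]\overline{B}_s(1,x)$ and $[x^n]\overline{B}_{ss}(1,x)$ via \eqref{eq:EV} --- is the cleanest way to close this point, and it is also the option the paper offers.
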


\begin{proof}
We can write equation~\eqref{eq:oB} as
$$\overline{B}(s,x)=1+\frac{1+F(s,x)}{2sx}, \quad \text{where } F(s,x)=-\sqrt{1+4sx\left(x-1+\sqrt{1-4x}\right)}.$$
This has the advantage that the generating function $F(s,x)$ can be expressed as a composition $F(s,x)=g(s h(x))$, where
$$g(x)=-\sqrt{1-4x} \quad \text{and} \quad h(x)=x\left(x-1+\sqrt{1-4x}\right);$$
therefore it is amenable to the singularity analysis techniques in \cite[Chapter IX]{FS}.

The radii of convergence of $g$ and $h$ are $\rho_g=\rho_h=1/4$. Letting $\tau_h=h(\rho_h)=3/16$, we have $\tau_h<\rho_g$, which corresponds to the {\em subcritical composition} case
, and so we can apply \cite[Prop. IX.1]{FS} to obtain the discrete limit law. Specifically, for fixed $k\ge1$,
$$\lim_{n\to\infty} \frac{[s^kx^n]F(s,x)}{[x^n]F(1,x)}=\frac{k\tau_h^{k-1}[x^k]g(x)}{g'(\tau_h)}=\frac{1}{2}\binom{2k-2}{k-1}\left(\frac{3}{16}\right)^{k-1}.$$
from where equation~\eqref{eq:discretelimit} follows, noting that
$$\Pr(X_n=k)= \frac{[s^kx^n]\overline{B}(s,x)}{[x^n]\overline{B}(1,x)}=\frac{[s^{k+1}x^{n+1}]F(s,x)}{[x^{n+1}]F(1,x)}
$$
for $n\ge1$.

The mean and variance can now easily be computed from equation~\eqref{eq:discretelimit}, or directly from $\overline{B}(s,x)$ as in equation~\eqref{eq:EV}.
\end{proof}

\section{Matchings}\label{sec:matchings}

Let $\M_n$ denote the set of perfect matchings (which we will call simply {\em matchings} for short) of $[2n]$.
It is well known that 
$$|\M_n|=(2n-1)!!=(2n-1)\cdot(2n-3)\cdot\cdots\cdot3\cdot1=\frac{(2n)!}{2^n n!},$$
and so the exponential generating function (EGF) for matchings is
$$\sum_{n\ge0} |\M_n| \frac{x^n}{n!}=\sum_{n\ge0} \frac{(2n)!}{2^n n!} \frac{x^n}{n!} = \sum_{n\ge0} \binom{2n}{n} \frac{x^n}{2^n}=\frac{1}{\sqrt{1-2x}}.$$

Matchings are often drawn by placing $2n$ vertices on a line, labeled $1,2,\dots,2n$ from left to right, and placing an arc connecting each pair of matched vertices. See Figure~\ref{fig:matching} for an example.

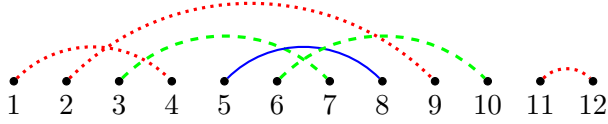
\begin{figure}[htb]
\centering
  \begin{tikzpicture}[scale=0.7]
   \foreach \i in {1,...,12}
        \node[pnt,label=below:$\i$] at (\i,0)(\i) {};
   \draw[very thick,red,dotted] (1)  to [bend left=45] (4);
   \draw[very thick,red,dotted](2)  to [bend left=45] (9);
   \draw[very thick,green,dashed](3)  to [bend left=45] (7);
   \draw[thick, blue] (5)  to [bend left=45] (8);
   \draw[very thick,green,dashed](6)  to [bend left=45] (10);
   \draw[very thick,red,dotted](11)  to [bend left=45] (12);
   \end{tikzpicture}
\caption{The matching $M=\{\{1,4\},\{2,9\},\{3,7\},\{5,8\},\{6,10\},\{11,12\}\}\in\M_{12}$ has one centered arc (blue, solid), one pair of coupled arcs (green, dashed), and three asymmetric arcs (red, dotted).}
\label{fig:matching}
\end{figure}

For a subset $A\subseteq[n]$, let $\overline{A}=\{2n+1-i:i\in A\}$. In a matching $M\in\M_n$, each arc $\{i,j\}\in M$ (that is, a pair of matched vertices) is of one of three types: 
\begin{itemize}
\item {\em centered}, if $j=2n+1-i$ (that is, $\overline{\{i,j\}}=\{i,j\}$);
\item {\em coupled}, if it is not centered but $\overline{\{i,j\}}\in M$ (in this case, we call $\{i,j\}$ and $\overline{\{i,j\}}$ a pair of coupled arcs);
\item {\em asymmetric}, if it is neither centered nor coupled. 
\end{itemize}

We define the {\em degree of asymmetry} of $M\in\M_n$, denoted by $\da(M)$, to be the number of asymmetric arcs that it contains.
Similarly, denote by $c(M)$ the number of centered arcs, and by $p(M)$ the number of pairs of coupled arcs in $M$. Note that 
\begin{equation}\label{eq:cpda}
c(M)+2p(M)+\da(M)=n.
\end{equation}
For example, the matching in Figure~\ref{fig:matching} has $c(M)=1$, $p(M)=1$, and $\da(M)=3$.

\begin{theorem}\label{thm:matchings}
The EGF for matchings with respect to the number of centered arcs, the number of pairs of coupled arcs, and the degree of asymmetry, is
$$H(r,s,t,x)=\sum_{n\ge0} \sum_{M\in\M_n} r^{c(M)}s^{p(M)}t^{\da(M)} \frac{x^n}{n!}=\frac{e^{(r-t)x+(s-t^2)x^2}}{\sqrt{1-2tx}}.$$
\end{theorem}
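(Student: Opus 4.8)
The plan is to treat the $n$ \emph{dual pairs} $\{i,2n+1-i\}$, $i\in[n]$, as the labeled atoms of an exponential-generating-function decomposition, exploiting the fact that the reflection $i\mapsto 2n+1-i$ is a fixed-point-free involution of $[2n]$ that swaps the two elements of each dual pair. First I would observe that each arc of a matching $M\in\M_n$ occupies dual pairs in a controlled way: a centered arc fills a single dual pair; a pair of coupled arcs fills two dual pairs, and given two dual pairs there are exactly two ways to join their four vertices into a reflection-symmetric, non-centered pair of arcs; and by \eqref{eq:cpda} the $\da(M)$ asymmetric arcs occupy precisely the remaining $\da(M)$ dual pairs. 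Thus every matching splits uniquely into a \emph{symmetric part} (its centered and coupled arcs, living on some subset of dual pairs, on which $\overline M=M$) and a \emph{totally asymmetric part} (its asymmetric arcs, on the complementary dual pairs).

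The point I would verify carefully is that being asymmetric is a \emph{local} property: an arc $\{i,j\}$ is asymmetric in $M$ iff $\overline{\{i,j\}}\notin M$, and since reflection preserves dual pairs, $\overline{\{i,j\}}$ can only lie in the sub-matching on the same block of dual pairs. Hence the decomposition is a genuine three-factor labeled product, and the product rule for exponential generating functions gives
\[
H(r,s,t,x)=e^{rx+sx^2}\,\Phi(t,x),
\]
where the factor $e^{rx}$ enumerates sets of centered arcs (one configuration, weight $r$, per dual pair), the factor $e^{sx^2}$ enumerates sets of coupled pairs (two configurations, weight $s$, per two dual pairs, giving a single-component EGF $2s\,x^2/2!=sx^2$), and $\Phi(t,x)=\sum_{d\ge0}f_d\,t^d x^d/d!$ is the EGF for totally asymmetric matchings, with $f_d$ the number of matchings in $\M_d$ having no centered or coupled arc and each arc weighted by $t$.

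Rather than computing $f_d$ explicitly, I would pin down $\Phi$ via two observations. Since a totally asymmetric matching on $d$ dual pairs has $\da=d$, every arc carries a factor $t$, so $\Phi(t,x)=\Phi(1,tx)$. Specializing the displayed identity at $r=s=t=1$, together with the known EGF $\sum_n|\M_n|x^n/n!=1/\sqrt{1-2x}$, yields $e^{x+x^2}\Phi(1,x)=1/\sqrt{1-2x}$, hence $\Phi(1,x)=e^{-x-x^2}/\sqrt{1-2x}$. Substituting $x\mapsto tx$ gives $\Phi(t,x)=e^{-tx-t^2x^2}/\sqrt{1-2tx}$, and plugging this back into the product collapses the exponentials to the claimed $e^{(r-t)x+(s-t^2)x^2}/\sqrt{1-2tx}$.

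The main obstacle is justifying the labeled product rigorously: I must check that the three components live on disjoint, freely labeled blocks of dual pairs; that reconstructing a matching from an arbitrary symmetric part and an arbitrary totally asymmetric part on complementary blocks always preserves the arc types, so that the correspondence is a bijection; and in particular that the count of totally asymmetric matchings on a block of $d$ dual pairs depends only on $d$, because the reflection always restricts to the within-pair swap on that block. The enumerative heart — the value of $f_d$ — then comes for free from the known total $1/\sqrt{1-2x}$, which is the elegant part of the argument and sidesteps any direct recursion.
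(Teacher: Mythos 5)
Your proposal is correct, and it takes a genuinely different route from the paper's. The paper never isolates the asymmetric arcs: it enumerates \emph{marked} matchings, i.e., triples $(M,S_1,S_2)$ where $S_1$ is a subset of the centered arcs of $M$ and $S_2$ a subset of its pairs of coupled arcs, by splitting the dual pairs into a block carrying an \emph{arbitrary} matching (contributing the known factor $1/\sqrt{1-2x}$) and blocks carrying pure centered and pure coupled structures (contributing $e^{ux+vx^2}$); the substitution $u=r-1$, $v=s-1$ then converts marked subsets into exact statistics, and $t$ is reinstated only at the very end through $c(M)+2p(M)+\da(M)=n$, via $H(r,s,t,x)=H(r/t,s/t^2,1,tx)$. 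You instead perform an exact three-way split into centered, coupled, and totally asymmetric parts, which is precisely why you must prove the locality lemma you flag: the vertices covered by the asymmetric arcs form a reflection-closed set (hence a union of dual pairs), and arc types are preserved when restricting to, or assembling from, reflection-closed blocks. That lemma is true and your sketch is sound; the quickest argument is that the vertices covered by centered and coupled arcs form a reflection-closed set, so their complement is reflection-closed as well, and an arc $\{i,j\}$ is asymmetric in $M$ if and only if $\overline{\{i,j\}}\notin M$, a condition involving only arcs within the same block since reflection preserves blocks. Your back-solving step $\Phi(1,x)=e^{-x-x^2}/\sqrt{1-2x}$ and the rescaling $\Phi(t,x)=\Phi(1,tx)$ are the same kind of rescaling trick the paper uses, applied to a single factor. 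As for what each approach buys: the paper's marking device avoids reasoning about asymmetric arcs altogether, so no locality lemma is needed, at the price of an inclusion--exclusion step that makes the proof non-bijective; your decomposition is fully bijective, exhibits the multiplicative structure $H(r,s,t,x)=e^{rx+sx^2}\Phi(t,x)$ directly, and produces the EGF $e^{-x-x^2}/\sqrt{1-2x}$ for completely asymmetric matchings as an immediate byproduct, a formula the paper only records afterwards as the specialization $H(0,0,1,x)$.
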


\begin{proof}
The EGF for matchings all of whose arcs are centered is $e^x$, whereas the EGF for matchings all of whose arcs are coupled is $e^{x^2}$.
More generally, the EGF for matchings all of whose arcs are centered or coupled, with variables $u$ and $v$ keeping track of the number of centered and the number of pairs of coupled arcs, respectively, is $e^{ux+vx^2}$. Indeed, such matchings on $2n$ vertices are equivalent to 
set partitions of $[n]$ into blocks of size one (marked by $u$) and two (marked by $v$).

Next, we consider matchings in $\M_n$ where an arbitrary subset of the centered arcs and an arbitrary subset of the pairs of coupled arcs have been marked, using a similar technique as in our first proof of Theorem~\ref{thm:001}. We denote the set of such marked matchings by $\M^\ast_n$. Formally, $\M^\ast_n$ is the set of triples $(M,S_1,S_2)$ where $M\in\M_n$, $S_1$ is a subset of centered arcs of $M$, and $S_2$ is a subset of pairs of coupled arcs of $M$. The EGF for marked matchings is
$$H^\ast(u,v,x)=\sum_{n\ge0}\sum_{(M,S_1,S_2)\in\M^\ast_n} u^{|S_1|}v^{|S_2|} \frac{x^n}{n!}=\frac{e^{ux+vx^2}}{\sqrt{1-2x}}.$$
Indeed, marked matchings are obtained by partitioning $[n]$ into three (possibly empty) sets, say $[n]=A_0\sqcup A_1\sqcup A_2$, placing an arbitrary matching with no marked arcs on the vertices $A_0\cup \overline{A_0}$, a matching consisting of only centered arcs on the vertices $A_1\cup \overline{A_1}$, and a matching consisting of only coupled arcs on the vertices $A_2\cup \overline{A_2}$.

Using again the fact that, for any finite set $T$,
$$
\sum_{S\subseteq T} u^{|S|}=(u+1)^{|T|}, \quad  \text{and so}\quad \sum_{S\subseteq T} (t-1)^{|S|}=t^{|T|},
$$
it follows that the EGF for matchings with respect to the statistics $c$ and $p$ is
\begin{equation}\label{eq:H}
H(r,s,1,x)=\sum_{n\ge0}\sum_{M\in\M_n} r^{c(M)}s^{p(M)} \frac{x^n}{n!}=H^\ast(r-1,s-1,x)=\frac{e^{(r-1)x+(s-1)x^2}}{\sqrt{1-2x}}.
\end{equation}

Finally, using equation~\eqref{eq:cpda}, we deduce that 
\[
H(r,s,t,x)=H(r/t,s/t^2,1,tx)=\frac{e^{(r-t)x+(s-t^2)x^2}}{\sqrt{1-2tx}}.\qedhere
\]
\end{proof}

Some specializations of the formula in Theorem~\ref{thm:matchings} are known.
For example, $H(1,1,0,x)=e^{x+x^2}$ is the EGF for symmetric matchings, that is, those that are fixed by the transformation $i\mapsto 2n+1-i$ applied to the elements of $[2n]$. Its coefficients give sequence A047974 in~\cite{OEIS}.
Similarly, the specialization $$H(0,1,1,x)=\frac{e^{-x}}{\sqrt{1-2x}}$$ is the EGF for matchings with no centered arcs. Its coefficients give sequence A053871 in~\cite{OEIS}.

Other specializations do not appear in~\cite{OEIS} yet. For example, $$H(0,0,1,x)=\frac{e^{-x-x^2}}{\sqrt{1-2x}}$$ is the EGF
for completely asymmetric matchings, that is, those containing no centered or coupled arcs. The first terms of the sequence starting at $n=1$ are $0$, $0$, $8$, $48$, $384$, $4480$, $59520$, $897792$, $15368192$.
Similarly, $$H(1,0,1,x)=\frac{e^{-x^2}}{\sqrt{1-2x}}$$ is the EGF
for matchings with no coupled arcs. The first terms of the sequence starting at $n=1$ are 
$1$, $1$, $9$, $81$, $705$, $7665$, $100905$, $1524705$, $26022465$.

Next we describe the asymptotic behavior of the number of arcs of each type. Because of equation~\eqref{eq:cpda}, it is sufficient to focus on the statistics $c$ and $p$, whose distributions converge to very natural discrete laws.

\begin{corollary}\label{cor:limitH}
Let $X_n$ and $Y_n$ be sequences of random variables giving the number of centered arcs and the number of pairs of coupled arcs, respectively, in a uniformly random matching in $\M_n$. Then $X_n$ converges to a Poisson law with parameter $\lambda=1/2$, and $Y_n$ converges to a Poisson law with parameter $\lambda=1/4$, i.e.
$$
\lim_{n\to\infty}\Pr(X_n=k)=\frac{e^{-\frac12}}{2^k k!} \quad\text{and}\quad \lim_{n\to\infty}\Pr(Y_n=\ell)=\frac{e^{-\frac14}}{4^\ell \ell!}
$$
for all $k,\ell\ge0$.
In addition, $X_n$ and $Y_n$ are asymptotically independent, in the sense that
$$
\lim_{n\to\infty}\Pr(X_n=k,Y_n=\ell)=\frac{e^{-\frac34}}{2^k 4^\ell  k!\ell!}.
$$
\end{corollary}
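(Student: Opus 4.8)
The plan is to read the joint limit law directly off the trivariate EGF of Theorem~\ref{thm:matchings}, exploiting the fact that the singularity in $x$ stays put as the marking variables vary; this is precisely what produces a discrete (rather than Gaussian) limit. Since the degree of asymmetry plays no role here, I set $t=1$, so that Theorem~\ref{thm:matchings} gives
$$H(r,s,1,x)=\frac{e^{(r-1)x+(s-1)x^2}}{\sqrt{1-2x}}.$$
Because $|\M_n|/n!=[x^n]\,(1-2x)^{-1/2}$, the joint probability generating function of $(X_n,Y_n)$ is
$$\E\!\left[r^{X_n}s^{Y_n}\right]=\frac{[x^n]\,H(r,s,1,x)}{[x^n]\,(1-2x)^{-1/2}},$$
and setting $s=1$ or $r=1$ yields the two marginals. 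The prefactor $e^{(r-1)x+(s-1)x^2}$ is entire, so for each fixed $(r,s)$ the only singularity of $H(r,s,1,x)$ remains the square-root singularity at $x=1/2$.

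The engine of the proof is the transfer principle that, for any entire function $A(x)$,
$$[x^n]\,\frac{A(x)}{\sqrt{1-2x}}\sim A(\tfrac12)\,[x^n]\,\frac{1}{\sqrt{1-2x}}\qquad(n\to\infty).$$
I would justify this by singularity analysis \cite[Ch.\ VI]{FS}: expanding $A$ about $x=1/2$, the leading singular term $A(1/2)(1-2x)^{-1/2}$ dominates all the corrections $(1-2x)^{j-1/2}$ with $j\ge1$. Alternatively, and more elementarily, writing $A(x)=\sum_{m}a_mx^m$ and using $[x^n]x^m(1-2x)^{-1/2}=\binom{2(n-m)}{n-m}2^{-(n-m)}$ gives
$$\frac{[x^n]\,A(x)(1-2x)^{-1/2}}{[x^n]\,(1-2x)^{-1/2}}=\sum_{m}a_m\,2^m\,\frac{\binom{2(n-m)}{n-m}}{\binom{2n}{n}}\longrightarrow\sum_m a_m\left(\tfrac12\right)^m=A(\tfrac12),$$
where the interchange of limit and sum is permitted by dominated convergence, since $\binom{2(n-m)}{n-m}/\binom{2n}{n}\le1$ and the coefficients $a_m$ of an entire function decay superexponentially.

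To obtain the explicit probabilities I would extract coefficients in the marking variables. Writing $e^{(r-1)x+(s-1)x^2}=e^{-x-x^2}e^{rx}e^{sx^2}$ gives
$$[r^ks^\ell]\,H(r,s,1,x)=e^{-x-x^2}\,\frac{x^{k+2\ell}}{k!\,\ell!}\,\frac{1}{\sqrt{1-2x}},$$
whose prefactor $A(x)=e^{-x-x^2}x^{k+2\ell}/(k!\,\ell!)$ is entire with $A(1/2)=e^{-3/4}(1/2)^{k}(1/4)^{\ell}/(k!\,\ell!)$. The transfer principle then yields
$$\lim_{n\to\infty}\Pr(X_n=k,\,Y_n=\ell)=\frac{e^{-3/4}}{2^k\,4^\ell\,k!\,\ell!}.$$
The two marginal laws follow identically from the specializations $s=1$ and $r=1$, whose prefactors evaluate to $e^{-1/2}(1/2)^k/k!$ and $e^{-1/4}(1/4)^\ell/\ell!$, the Poisson probabilities with parameters $1/2$ and $1/4$. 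Asymptotic independence is then automatic, since the joint limit factors as the product of the two marginals; structurally, this reflects that the entire prefactor splits as a function weighted by $x^k$ times one weighted by $x^{2\ell}$, so the two marking variables decouple upon evaluation at $x=1/2$.

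The only delicate point is the transfer step, where I must confirm that the entire prefactor may genuinely be replaced by its value at the singularity, i.e.\ that every correction from its Taylor expansion about $x=1/2$ contributes at strictly smaller polynomial order in $n$. Given the square-root type of the singularity and the superexponential decay of the coefficients of $A$, this is routine and I do not anticipate a real obstacle; recognizing the limits as Poisson probabilities and checking that they sum to $1$ are then immediate.
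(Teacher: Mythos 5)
Your proposal is correct and follows essentially the same route as the paper's proof: set $t=1$ in Theorem~\ref{thm:matchings}, extract coefficients in $r$ and $s$ to get an entire prefactor times $(1-2x)^{-1/2}$, transfer the prefactor's value at the dominant singularity $x=1/2$, and divide by $[x^n](1-2x)^{-1/2}$. Your only addition is the self-contained dominated-convergence justification of the transfer step, a valid elementary substitute for the paper's citation of \cite[Thm.\ VI.1]{FS} that does not change the structure of the argument.
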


\begin{proof}
To compute these probabilities, we extract the relevant coefficients of the EGF in Theorem~\ref{thm:matchings}, and then determine their behavior as $n\to\infty$ using standard singularity analysis \cite[Thm.\ VI.1]{FS}, noting that the dominant singularity is at $x=1/2$:
$$[r^kx^n]H(r,1,1,x)=[x^n]\frac{e^{-x}}{\sqrt{1-2x}}\frac{x^k}{k!}\sim \frac{e^{-\frac12}}{2^k k!}\frac{2^n}{\sqrt{\pi n}}.$$
Dividing by $$[x^n]H(1,1,1,x)=[x^n]\frac{1}{\sqrt{1-2x}}\sim \frac{2^n}{\sqrt{\pi n}}$$ and using equation~\eqref{eq:Xn}, which also applies  to exponential generating functions, we obtain the stated expression for $\lim_{n\to\infty}\Pr(X_n=k)$.

Similarly, the expression for $\lim_{n\to\infty}\Pr(Y_n=\ell)$ follows from the fact that
$$[s^\ell x^n]H(1,s,1,x)=[x^n]\frac{e^{-x^2}}{\sqrt{1-2x}}\frac{x^{2\ell}}{\ell!}\sim \frac{e^{-\frac14}}{4^\ell \ell!}\frac{2^n}{\sqrt{\pi n}}.$$
Finally, the joint distribution is obtained by computing 
$$[r^k s^\ell x^n]H(r,s,1,x)=[x^n]\frac{e^{-x-x^2}}{\sqrt{1-2x}}\frac{x^k}{k!}\frac{x^{2\ell}}{\ell!}\sim \frac{e^{-\frac34}}{2^k 4^\ell k! \ell!}\frac{2^n}{\sqrt{\pi n}}$$
and using that
\[
\Pr(X_n=k,Y_n=\ell)=\frac{[r^k s^\ell x^n]H(r,s,1,x)}{[x^n]H(1,1,1,x)}.
\qedhere
\]
\end{proof}

\section{Permutations}\label{sec:perm}

One way to measure symmetry in a permutation $\pi\in\S_n$ is by considering how close the configuration of dots in coordinates $(i,\pi(i))$, for $i\in[n]$, is to being symmetric with respect to the diagonal. Under this measure, it is natural to define the degree of asymmetry of a permutation as the number of dots whose reflection along the diagonal does not match another dot, that is,
$$\da(\pi):=\left|\{i:\pi(i)\neq\pi^{-1}(i)\}\right|.$$

Note that $\da(\pi)$ equals the number of elements in $[n]$ that do not belong to $1$-cycles or $2$-cycles of $\pi$.
Using the standard decomposition of permutations as products of disjoint cycles (see e.g. \cite[Ex.\ II.12]{FS}), and denoting by $c_m(\pi)$ the number of $m$-cycles of $\pi$, we obtain the EGF
$$P(r,s,t,x)=\sum_{n\ge0}\sum_{\pi\in\S_n} r^{c_1(\pi)} s^{c_2(\pi)} t^{\da(\pi)} \frac{x^n}{n!}=\frac{e^{(r-t)x+(s-t^2)x^2/2}}{1-tx}.$$

The specialization $P(1,1,0,x)=e^{x+x^2/2}$ is the EGF for involutions \cite[A000085]{OEIS}, which are those permutations that are symmetric according to our definition.
On the other hand, the specialization $$P(0,0,1,x)=\frac{e^{-x-x^2/2}}{1-x}$$ is the EGF for permutations with no $1$-cycles or $2$-cycles \cite[A038205]{OEIS}, that is, permutations that are completely asymmetric.

It is well known that, for any fixed $m\ge1$, the number of $m$-cycles in a uniformly random permutation in $\S_n$ converges to a Poisson law with parameter $\lambda=1/m$ \cite[Example IX.4]{FS}. Additionally, as in the proof of Corollary~\ref{cor:limitH}, the distributions of these statistics for different values of $m$ are asymptotically independent.

\section{A problem regarding unimodal compositions}

In Section~\ref{sec:comp}, we found the generating function for compositions with respect to the degree of asymmetry. 
Let us consider a variant of this problem by restricting to unimodal compositions, namely, compositions $(a_1,a_2,\dots,a_m)$ for which there exists $i$ such that 
\begin{equation}\label{eq:unimodal} a_1\le a_2\le \dots\le a_i\ge a_{i+1}\ge\dots \ge a_m. 
\end{equation}

\begin{problem}\label{prob:unimodal}
Find the generating function $U(t,z)$ for unimodal compositions where $t$ marks the degree of asymmetry and $z$ marks the sum of the parts.
\end{problem}

Let us show that the specializations $U(1,z)$ and $U(0,z)$ are easy to obtain, and their coefficients give sequences A001523 and A096441 in~\cite{OEIS}, respectively.
Recall that a partition is a weakly decreasing composition, and that the generating function for partitions with parts of size less than $k$, where $z$ marks the sum of the parts, is 
\begin{equation}\label{eq:partitions} \prod_{i=1}^{k-1}\frac{1}{1-z^i}. 
\end{equation}

It follows that the generating function for unimodal compositions is
$$U(1,z)=\sum_{k\ge1} \frac{z^k}{1-z^k}\left(\prod_{i=1}^{k-1}\frac{1}{1-z^i}\right)^2.$$
Indeed, let $k$ be the largest part of a unimodal composition. The factor $\frac{z^k}{1-z^k}$ accounts for copies of $k$, of which there is at least one. To the left (respectively, right) of $k$ there is a weakly increasing (respectively, decreasing) sequence of entries less than $k$; equivalently, a partition with parts of size less than $k$, contributing a factor as in~\eqref{eq:partitions}.
It is also possible, as shown in~\cite[Cor.\ 2.5.3]{EC1}, to give the following product formula for $U(1,z)$ using a sieve process:
$$U(1,z)=\left(\sum_{k\ge1} (-1)^{k-1} z^{\binom{k+1}{2}}\right)\prod_{k\ge1} (1-x^k)^{-2}.$$

A similar argument can be used to count unimodal compositions with degree of asymmetry equal to 0, that is, palindromic unimodal compositions:
$$U(0,z)=\sum_{k\ge1} \frac{z^k}{1-z^k}\prod_{i=1}^{k-1}\frac{1}{1-z^{2i}}.$$
In this case, the weakly increasing piece to the left of the largest part has to be a mirror image of the weakly decreasing piece to the right, and so these two pieces correspond to the same partition with parts of size less than $k$. 

We can obtain another expression for $U(0,z)$ by describing a bijection between palindromic unimodal compositions and partitions where all the parts are odd or all the parts are even. Given a palindromic unimodal composition $a=(a_1,a_2,\dots,a_m)$, construct a partition $\lambda$ by letting $\lambda_i=|\{j:a_j\ge i\}|$. If $m$ is even (odd), then $\lambda_i$ is even (odd) for all $i$, because $a$ is palindromic and unimodal. Additionally, given a partition $\lambda$ where all parts are odd or all parts are even, we can recover $a$ as follows: start with the composition $a_j=0$ for all $1\le j\le m$. For each part $a_i$, add $1$ to $a_j$ if $j$ is one of the $a_i$ most central indices, that is, those closest to $(m+1)/2$.
It follows that
$$U(0,z)=\prod_{j\ge1}\frac{1}{1-z^{2j}}+\prod_{j\ge1}\frac{1}{1-z^{2j-1}}-2.$$

As a variation of Problem~\ref{prob:unimodal}, one can consider unimodal compositions with the additional restriction that their maximum lies in the middle, that is, those satisfying equation~\eqref{eq:unimodal} with $i=(k+1)/2$ when $k$ is odd, and with $i\in\{k/2,k/2+1\}$ when $k$ is even. 
A different version of this question, where the size of a composition is defined to be the largest part plus the number of parts (equivalently, the semiperimeter of the associated unimodal bargraph), is addressed in~\cite{Elids}.


\begin{thebibliography}{} 

\bibitem{Albenque} M. Albenque, \'E. Fusy and D. Poulalhon, On symmetric quadrangulations and triangulations, {\it European J. Combin.} 35 (2014), 13--31. 

\bibitem{Bender} E. A. Bender, Central and local limit theorems applied to asymptotic enumeration, {\it J. Combinatorial Theory Ser. A} 15 (1973), 91--111. 

\bibitem{DDS} L. H. Deng, Y. P. Deng and L. W. Shapiro, 
The Riordan group and symmetric lattice paths, {\it J. Shandong Univ. Nat. Sci.} 50 (2015), 82--89.

\bibitem{Elifpexc} S. Elizalde, Fixed points and excedances in restricted permutations, {\it  Electron. J. Combin.} 18 (2012), \#P29.

\bibitem{Elids} S. Elizalde, The degree of symmetry of lattice paths, \textit{preprint}, \texttt{arXiv:2002.12874}.

\bibitem{EliDeu} S. Elizalde and E. Deutsch, A simple and unusual bijection for Dyck paths and its consequences, {\it Ann. Comb.} 7 (2003), 281--297.

\bibitem{EliPak} S. Elizalde and I. Pak, Bijections for refined restricted permutations, {\it J. Combin. Theory Ser. A} 105 (2004), 207--219.

\bibitem{Fer} S. Fereti\'c, A perimeter enumeration of column-convex polyominoes, \textit{Discrete Math. Theor. Comput. Sci.} 9 (2007), 57--84.

\bibitem{FS} P. Flajolet and R. Sedgewick, {\it Analytic combinatorics}, Cambridge University Press, Cambridge, 2009.

\bibitem{HB} V. E. Hoggatt Jr. and M. Bicknell, Palindromic Compositions, {\it Fibonacci Quart.} 13 (1975), 350--356.

\bibitem{Mac} P. A. MacMahon, Partitions of numbers whose graphs possess symmetry, {\it Trans. Cambridge Philos. Soc.} 17 (1899), 149--170.

\bibitem{MSTT} K. Manes, A. Sapounakis, I. Tasoulas and P. Tsikouras, Strings of length 3 in Grand-Dyck paths and the Chung-Feller property, {\it Electron. J. Combin.} 19(2) (2012), \#P2.

\bibitem{OEIS} The On-Line Encyclopedia of Integer Sequences, published electronically at \url{https://oeis.org}.

\bibitem{Simion} R. Simion, A type-$B$ associahedron, \textit{Adv. in Appl. Math.} 30 (2003), 2--25. 

\bibitem{EC1} R. P. Stanley, {\it Enumerative Combinatorics, vol.~1}, Second Edition, Cambridge Studies in Advanced Mathematics 49,
Cambridge University Press, Cambridge, 2012.

\bibitem{Stanley} R. P. Stanley, Symmetries of plane partitions, \textit{J. Combin. Theory Ser. A} 43 (1986), 103--113.


\end{thebibliography}
\end{document}